\tikzstyle{startstop} = [rectangle,rounded corners, align=center, draw=black]
\tikzstyle{io} = [rectangle,rounded corners, align=center, draw=black]
\tikzstyle{process} = [rectangle,rounded corners, align=center, draw=black]
\tikzstyle{decision} = [rectangle,rounded corners, align=center, draw=black]
\newtheorem{thm}{Theorem}[section]
\newtheorem*{thm*}{Theorem}
\newtheorem{cor}[thm]{Corollary}
\newtheorem*{claim*}{Claim}
\newtheorem{lem}[thm]{Lemma}
\newtheorem{prop}[thm]{Proposition}
\theoremstyle{definition}
\newtheorem{defn}[thm]{Definition}
\newtheorem{nt}[thm]{Notation}
\newtheorem{rem}[thm]{Remark}
\newtheorem{ex}[thm]{Example}
\newtheorem*{rem*}{Remark}
\DeclareMathOperator{\Ad}{Ad}
\tikzset{every picture/.style={line width=0.75pt}} 
\def \Z{{\mathbb Z}}
\def\Aut{\operatorname{Aut}}
\def\Out{\operatorname{Out}}
\def\Inn{\operatorname{Inn}}
\def\rank{\operatorname{rank}}
\def\semidirect{\rtimes}
\newcommand{\fix}{\mbox{\rm Fix } \phi}
\newcommand{\Chi}{{\rm X}} 
\title{The conjugacy problem for $\Out(F_3)$.}
\author{Fran\c{c}ois Dahmani}
\address{Univ. Grenoble Alpes, CNRS, IF, 38000 Grenoble, France, and \\ IRL-CRM CNRS, Universit\'e de Montr\'eal, Montr\'eal, Canada.}
\email{francois.dahmani@univ-grenoble-alpes.fr}
\author{Stefano Francaviglia}
\address{Dipartimento di Matematica of the University of
	Bologna}
\email{stefano.francaviglia@unibo.it}
\author{Armando Martino}
\address{Mathematical Sciences, University of Southampton }
\email{A.Martino@soton.ac.uk}
\author{Nicholas Touikan}
\address{Department of Mathematics \& Statistics\\ University of New Brunswick (Fredericton)}
\email{nicholas.touikan@unb.ca}
\subjclass{20F65, 20F67 ,20E05, 20E36, 20F10}
\begin{document}

\maketitle
\begin{abstract} We present a solution to the Conjugacy Problem in the group of outer-automorphisms of $F_3$, a free group of rank 3.  We distinguish according to several computable invariants, such as irreducibility, subgroups of polynomial growth, and subgroups carrying the attracting lamination.  We establish, by considerations on train tracks,  that the conjugacy problem is decidable for the outer-automorphisms  of $F_3$ that preserve a given rank 2 free factor. Then we establish, by consideration on mapping tori, that it is decidable for outer-automorphisms  of $F_3$ whose maximal polynomial growth subgroups are cyclic. This covers all the cases left by the state of the art. 
\end{abstract}


\pagebreak
\tableofcontents


\section{Introduction}

In this paper we solve Dehn's conjugacy problem in $\Out(F_3)$, the group of outer automorphisms of the free group $F_3$ of rank 3: 

\begin{thm}\label{thm:main}
    There exists an algorithm which takes two automorphisms $\phi,\psi\ \in Aut(F_3)$ and correctly outputs \textbf{yes} or \textbf{no} whether their outer classes are conjugate in $\Out(F_3)$.
\end{thm}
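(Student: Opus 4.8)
The plan is to prove Theorem~\ref{thm:main} as a reduction: I would feed any pair $\phi,\psi\in\Out(F_3)$ into one of the two decidable regimes announced above — outer automorphisms preserving a given rank $2$ free factor (the train track theorem), and outer automorphisms whose maximal polynomial-growth subgroups are cyclic (the mapping torus theorem) — by an algorithmic case analysis. First I would compute, from relative train track representatives of $\phi$ and $\psi$, a short list of invariants: the growth type (polynomial versus exponential), whether the automorphism is fully irreducible, the canonical invariant free factor systems, the canonical subgroups carrying the attracting lamination, and the canonical family of maximal polynomial-growth subgroups. Each of these is effectively computable and is preserved under conjugation in $\Out(F_3)$, so if $\phi$ and $\psi$ disagree on any of them I output \textbf{no} at once. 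It then suffices to decide conjugacy within each invariant class separately.

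The routing I have in mind is as follows. If $\phi$ is fully irreducible, I appeal to the state of the art, where the conjugacy problem for fully irreducible outer automorphisms is already known to be decidable (via word-hyperbolicity of the mapping torus in the atoroidal case and via the surface picture in the geometric case); here being fully irreducible is a computable conjugacy invariant, so $\phi$ and $\psi$ are treated together. If $\phi$ is not fully irreducible, it preserves a proper free factor system, and I split on whether it preserves a rank $2$ free factor. When it does — equivalently, in the exponentially growing case, when the attracting lamination is carried by a proper (hence rank $2$) free factor, or when a rank $2$ piece of the filtration is invariant — I invoke the train track theorem. When it does not, every invariant proper free factor has rank at most $1$, and I would show that this forces all maximal polynomial-growth subgroups to be cyclic, so that the mapping torus theorem applies. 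The remaining possibility is that a maximal polynomial-growth subgroup is all of $F_3$, i.e. $\phi$ grows polynomially; such $\phi$ I would reduce to the train track theorem through an invariant rank $2$ free factor of a suitable power together with its finite permutational part, the finitely many bounded (finite-order) classes being enumerated directly. Thus the two theorems and the fully irreducible case are meant to be exhaustive.

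To apply the train track theorem I would first normalise: since the relevant rank $2$ free factor is canonical (it carries the lamination, respectively it is the top polynomial piece), I can pre-conjugate $\psi$ so that its invariant rank $2$ factor coincides with a fixed standard factor, checking only finitely many, explicitly computable conjugator classes, and then run the decision procedure for automorphisms preserving that factor. For the mapping torus theorem I would pass to $F_3\rtimes_\phi\Z$ and $F_3\rtimes_\psi\Z$, which under the cyclic hypothesis are hyperbolic relative to the mapping tori of the polynomial pieces, and translate conjugacy of $\phi,\psi$ in $\Out(F_3)$ into a fibre- and orientation-preserving isomorphism of these relatively hyperbolic groups, decidable by the isomorphism machinery for such groups. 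The hardest part, I expect, is making the case split genuinely exhaustive and effective in the borderline configurations: proving that when no rank $2$ free factor is invariant the maximal polynomial-growth subgroups really are cyclic, handling the finite-order and permutational (root/power) bookkeeping needed to reduce polynomially growing automorphisms to an invariant rank $2$ factor, and, on the mapping torus side, recognising the fibre $F_3$ canonically inside $F_3\rtimes_\phi\Z$ so that an abstract isomorphism can be upgraded to the marked, fibre-preserving isomorphism that actually detects conjugacy in $\Out(F_3)$.
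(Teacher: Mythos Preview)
Your overall architecture matches the paper's: route each pair into either (a) the known solution for irreducible automorphisms, (b) the conjugacy problem in $\Out(F_3,K)$ for a canonical invariant rank~2 free factor $K$, or (c) the almost-toral relatively hyperbolic mapping torus machinery. But two steps of your case analysis have genuine gaps.

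First, you split on \emph{fully} irreducible and then assert that a non-fully-irreducible $\Phi$ ``preserves a proper free factor system''. This is false: not fully irreducible only means some \emph{power} does. The cyclic permutation $a\mapsto b$, $b\mapsto c$, $c\mapsto a$ is irreducible but not fully irreducible and preserves no proper free factor; your branching sends it nowhere sensible. The paper instead splits on irreducibility itself (decidable by \cite{Kapovich_algo,Francaviglia2022}) and invokes \cite{Los,Lustig2007} for the conjugacy problem among \emph{all} irreducible automorphisms, not just the fully irreducible ones your atoroidal/geometric dichotomy covers.

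Second, your handling of polynomial growth is incomplete. You propose to reduce every polynomially growing $\Phi$ to the rank~2 theorem via ``an invariant rank~2 free factor of a suitable power together with its finite permutational part'', but the rank~2 theorem (Theorem~\ref{thm:rank_2_reducible}) requires the restriction to $K$ to have infinite order in $\Out(K)$, and more fundamentally there is no \emph{canonical} invariant rank~2 free factor in degree $0$ or $1$: a Dehn twist can have several non-conjugate invariant rank~2 factors, so your normalisation step ``pre-conjugate $\psi$ so the factors coincide'' does not reduce conjugacy in $\Out(F_3)$ to conjugacy in $\Out(F_3,K)$. The paper handles degree $\leq 1$ by citing the existing solutions \cite{krstic_actions_1989,KrsticLustigVogtmann}, and only in degree~$2$ proves (Theorem~\ref{UPGinvariance}, Corollary~\ref{cor:quadratic_reducible}) that the invariant rank~2 free factor exists and is \emph{unique} --- the uniqueness being precisely what makes the reduction to $\Out(F_3,K)$ legitimate. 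You need either to invoke those references or to supply a comparable uniqueness argument; the phrase ``finite permutational part'' does not do this.
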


The conjugacy problem has already been solved within certain classes  of outer automorphisms of free groups:  the atoroidal fully irreducible ones   by Sela \cite{Sela}, all the irreducible ones by Los \cite{Los},  all the atoroidal ones  by  \cite{Dahmani2016},   the linearly growing ones  by Kristi\'c, Lustig, and Vogtmann \cite{KrsticLustigVogtmann},  the unipotent polynomially growing ones  by Feighn and Handel \cite{Feighn_Handel_Conjugacy}.

In the case of a free group of rank 2, $\Out(F_2) \simeq \mathrm{GL}(2,\mathbb Z)$ is well known and virtually free, and there is an       
algorithm to solve its conjugacy problem.

In the case of a free group of rank 3, $\Out(F_3)$ is more complicated, and our conjugacy decision algorithm operates by classifying the pair of input automorphisms by means of invariants, in subclasses, in which  specific methods can be applied.

Let us mention key conjugacy invariants  that are relevant in these classifications,  and are computable. They give a first frame of reference.

\begin{itemize}
    \item Irreducibility (whether there is an invariant conjugacy class of free factor system, or not), \cite{Kapovich_algo}, \cite{Francaviglia2022};
    \item exponential growth (whether there is a conjugacy class whose iterated images grow exponentially fast, or not), \cite{Bestvina2005}; 
    \item  rank of the maximal polynomially growing subgroups (the maximal subgroups whose elements have iterated images that grow at most polynomially in conjugacy length), and  polynomial degree of growth in these subgroups (Proposition \ref{prop:alggrowth});
    \item arrangement of these subgroups (their orbit under the automorphism group of the ambiant free group) (Gersten's algorithm);
    \item ranks of the free factors carrying the so-called attracting lamination  (Proposition \ref{prop;tt_for_lamination_ffs}). 
\end{itemize}

\begin{ex}\label{ex;quadratic} The map $a\mapsto a$, $b\mapsto ba$, $c\mapsto cb$ extends to the free group of basis $\{a,b,c\}$ as an automorphism that is reducible ($\langle a, b\rangle $ is an invariant free factor of rank $2$),  polynomially growing of degree $2$ (the growth of $c$ is quadratic). 
\end{ex}

\begin{ex}\label{ex;toral}
A pseudo-Anosov mapping class on a twice punctured torus  gives an automorphism that is reducible (each puncture corresponds to a free factor of rank one that is preserved), of exponential growth, with two conjugacy classes of maximal polynomially growing subgroups (corresponding to the punctures), both cyclic,  on which the automorphism has polynomial degree 0. They are both rank 1 free factors, but not in a same free factor system.  The attracting lamination is supported by the entire group.     
\end{ex}
\begin{ex}\label{ex;TwedgeC}
    Take $T$ a torus with one hole $\partial T$, with a base point on its  boundary, and a circle $C$ with a base point, and take the wedge of these spaces, identifying the two base points: its fundamental group is $F_3$.   Consider a pseudo-Anosov mapping class  on $T$, fixing  the boundary pointwise, and a map that sends $C$ on the concatenation $C\cdot \ell$ for a chosen loop $\ell$ in $T$.  The defined map on $T\wedge C$  induces an automorphism of $F_3$ that is reducible, of exponential growth, with an invariant free factor of rank $2$ (the group of $T$). The cyclic group of the boundary $\partial T$ of $T$ is  polynomially growing of degree $0$. Depending on the choice of $\ell$, it might or might not be a maximal polynomially growing subgroup: if $\ell$ is a power of the loop $\partial T$, the maximal polynomially growing subgroup containing $\pi_1(\partial T)$ is actually of rank 2, on which the automorphism  has polynomial degree $0$ or $1$, and it  is not a free factor of $F_3$ but rather a factor of a decomposition of $F_3$ as some amalgamated free product \[\pi_1(T) *_{\pi_1(\partial T)}  \pi_1 (C \wedge \partial T).\]   The attracting lamination is  carried by the rank 2 free factor $\pi_1(T)$. 
\end{ex}

 It is nevertheless not sufficient to collect these invariants to have characterised the conjugacy class of an outer-automorphism. For instance, knowing that the automorphisms are irreducible, with pure exponential growth (i.e. the only polynomially growing subgroup is the trivial one) still requires stamina to decide the conjugation between such outer-automorphisms. Actually, all the solved cases listed in the beginning of this introduction all correspond to some particular situation in this frame of reference. 
 
 In our study of $\Out(F_3)$, we will observe that, given the state of the art,  only three configurations in this frame of reference need to be treated. We will cast two of them in the point of view of automorphisms preserving a specific rank 2 free factor. We will cast the last one in the point of view of toral relatively hyperbolic mapping tori. The first configurations is that of the polynomial growth of degree $2$ on the group $F_3$ (well illustrated by Example \ref{ex;quadratic}). The second configuration is the case of exponential growth in which a rank 2 free factor is invariant, and "attracts" all the growth.  In that case (that is well illustrated by our example \ref{ex;TwedgeC}) there is only one class of maximal polynomially growing subgroup, it is not a free factor, it is  either  cyclic (generated by the commutator of a basis in the invariant rank 2 free factor),  or of rank 2  (attached to this free factor). The last case (illustrated by Example \ref{ex;toral})  is when the growth is exponential, with a rank 1 free factor preserved, and the maximal polynomial growth subgroups are of rank 1 (there might  
  be two of them up to conjugacy).   

Our algorithm is shown in Figure \ref{fig:THE_algo} and its correctness is derived from the correctness of every classification step as well as every terminal step. There are several ways to separate cases 
toward the use of Theorem \ref{thm:rank_2_reducible} or the use of Theorem \ref{thm;solve-atrh} or of methods in section \ref{sec:expo_ii},
since they overlap, but considering complexity it seems reasonable to leave the later for the smaller number of cases.

 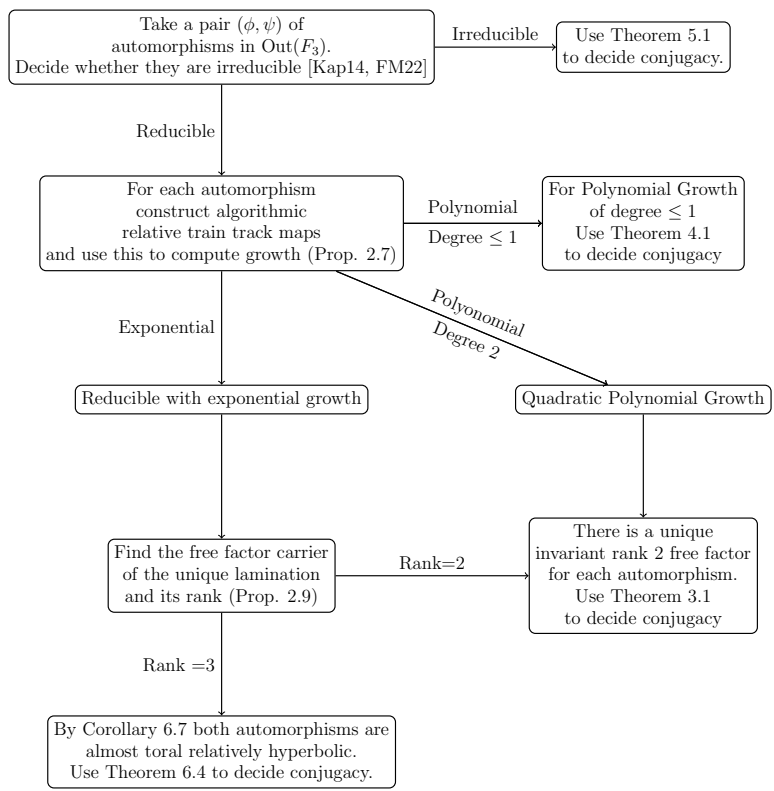
\begin{figure}[htb]
	\centering
\resizebox{.8\textwidth}{!}
{ 	
\begin{tikzpicture}[node distance=4cm]
\node (input) [io] {Take a pair $(\phi,\psi)$ of  \\automorphisms in $\Out(F_3)$. \\ Decide whether they are irreducible \cite{Kapovich_algo, Francaviglia2022}};
\node (irred)	[io, right of=input, xshift=5.5cm] {Use Theorem \ref{thm:CP-iwip} \\ to decide conjugacy.};
\draw[->] (input) -- node[above]{Irreducible}(irred); 	
\node (reducible) [io, below of=input]{For each automorphism \\ construct algorithmic \\ relative train track maps \\  and use this to compute growth (Prop. \ref{prop:alggrowth})};
\draw[->] (input)  -- node[left]{Reducible}(reducible);
\node (exponential) [io, below of=reducible]{Reducible with exponential growth}; 
\draw[->] (reducible) -- node[left]{Exponential} (exponential);
\node (poly) [io, right of=reducible, xshift=5.5cm]{For Polynomial Growth \\ of degree $\leq 1$ \\ Use Theorem \ref{thm:CP_subquadratic}\\ to decide conjugacy};
\draw[->](reducible) -- node[above]{Polynomial}(poly)      ; 
\draw[->](reducible) -- node[below]{Degree $\leq 1$}(poly)      ; 
\node (deg2) [io, below of=poly]{Quadratic Polynomial Growth}  ; 
\draw[->] (reducible) -- node[sloped, above]{Polyonomial} (deg2); 
\draw[->] (reducible) -- node[sloped, below]{Degree 2} (deg2); 
\node (lamination) [io, below of=exponential]{Find the free factor carrier \\ of the unique lamination \\ and its rank (Prop. \ref{prop;tt_for_lamination_ffs})}; 
\draw[->] (exponential) -- (lamination);
\node (rank2) [io, below of=deg2]{There is a unique \\invariant rank 2 free factor  \\ for each automorphism.  \\ Use Theorem \ref{thm:rank_2_reducible} \\ to decide conjugacy};
\draw[->] (deg2) -- (rank2); 
\draw[->] (lamination) -- node[above]{Rank=2}(rank2); 
\node (filling) [io, below of=lamination]{By Corollary \ref{cor;no_iff2_atrh} both automorphisms are \\almost toral relatively hyperbolic. \\ Use Theorem \ref{thm;solve-atrh} to decide conjugacy. }; 
\draw[->] (lamination) -- node[left]{Rank =3}(filling);
\end{tikzpicture}}
     \caption{Our algorithm to solve the conjugacy problem in $\Out(F_3)$. If at any time the  given automorphisms $\phi$ and $\psi$ follow different arrows, they are not conjugate.}  \label{fig:THE_algo}
\end{figure}

    In Section \ref{sec:rank2_reduction} we show how to solve the conjugacy problem in $\Out(F_3)$ among all outer automorphisms with a given invariant rank 2 free factor. This result, in a sense, is a type of induction step from $\Out(F_2)$ to $\Out(F_3)$, but as will be explained in Remark \ref{rem:no_induction}, it will not generalize similarly to higher ranks. This case is critically important as we will show that most problematic situations (which actually fall in the case  where the given automorphism induces a polynomial growth automorphism on a non-cyclic subgroup) lead to an invariant free factor of rank 2.

    In Section \ref{sec:quadratic} we show that quadratically growing outer automorphisms have an invariant rank 2 free factor. In Section \ref{sec:expo_i} we use train track methods to study the types of attracting laminations that arise. One possibility gives rise (again) to an an invariant rank 2 free factor. In Section \ref{sec:expo_ii} we show that the other possibility implies that the outer automorphisms are so-called almost toral relatively hyperbolic and the conjugacy problem for this class of outer automorphisms is established by  the main result of \cite{DahmaniTouikan2021} in its particular case where  the so-called peripheral subgroups are $\mathbb{Z}\times \mathbb{Z}$  or  $\mathbb Z \rtimes \mathbb Z$, the fundamental group  of the Klein bottle.

          \subsection*{Acknowledgements} We would like to thank Vincent Guirardel, Mark Feighn and Michael Handel for valuable discussions. 

            F.D. is supported by ANR-22-CE40-0004 GoFR. S.F. is supported by INdAM group GNSAGA and the European Union - NextGenerationEU under the National Recovery and Resilience Plan (PNRR) – Mission 4 Education and Research – Component 2 From Research to Business - Investment 1.1, Notice Prin 2022 issued with Decree No. 104 on 2/2/2022, with the title `Geometry and Topology of Manifolds,' proposal code 2022NMPLT8 - CUP J53D23003820001. N.T. is supported by an NSERC Discovery Grant.

          \section{Computability of a few invariants}
          
\subsection*{Notations}
          
    We begin by some general notations, and definitions.

\begin{nt}\label{treedef}
~	   
    \begin{itemize}
		\item In a group $G$, the conjugation of $g$ by $h$ is  $g^h:=h^{-1} g h$.
	    \item $\Aut(G)$ acts on the right: for $\phi \in \Aut(G)$, $g \in G$, the image of $g$ under $\phi$ is $g \phi$.	
		\item $\Ad(g)$ denotes the automorphism of $G$ that is conjugation by $g$: $x\Ad(g)=x^g$.  Hence $\Ad(g) \Ad(h) = \Ad(gh)$. 
        \item $\Out(G) = \Aut(G)/ \Inn (G)$, for $\Inn (G) = \{ \Ad(g), g\in G\}$.
		\item   $F_3= \langle a,b,c \rangle$ denotes a free group of rank $3$. 
        \item We will prefer the notation $K$ for free factors, and $H$ for subgroups. 
	       \item  We write outer-automorphisms with capital greek letters, and automorphisms with small greek letter: if $\Phi$ is an outer-automorphism, $\phi\in \Phi$ is an automorphism in its class.   In this convention, (according to context) $\Chi$ reads {\it Chi}, and $\chi \in \Chi$ is an automorphism.
    \end{itemize}
\end{nt}

    For $\Phi \in \Out(F)$, we say a subgroup $H\leq F$ is \emph{$\Phi$-invariant} if for any $\phi \in \Phi$ we have that $H$ is conjugate to $H\phi$. 

    A \emph{free factor system} of  $F$ is a set of conjugacy classes of subgroups of $F$,  $\{[K_1], \dots, [K_m]\}$ such that there exists a free subgroup $F_r<F$ for which  $F= K_1 * \dots *K_m * F_r$.   It is proper if the $K_i$ are neither $\{1\}$ nor $F$.    It is $\Phi$ invariant if all $K_i$ (not necessarily $F_r$) are $\Phi$ invariants.

    A free factor system $\{[Y_1], \dots, [Y_s]\}$ is \emph{smaller} than $\{[K_1], \dots, [K_m]\}$ if for each $i$ there exists $j$ such that  $Y_i$ has conjugate inside $K_j$. 

    \subsection*{Irreducibility}\label{sec;decide_irred}

    A first conjugacy invariant, and decidable, property is the irreducibility of outer-automorphisms.

    \begin{defn}
     An outer-automorphism $\Phi$ is called \emph{reducible} if it admits an invariant proper, non-trivial, free factor system - see \cite{Bestvina2000}. Otherwise it is said to be \emph{irreducible}. 

     An automorphism is  \emph{fully irreducible} if  every positive power is irreducible. 
    \end{defn}

   \begin{rem}
   An automorphism is fully irreducible if and only if the only \emph{periodic} free factors preserved up to conjugacy are given by either the trivial subgroup or the whole group. Every fully irreducible automorphism is irreducible, but not conversely. 

   An example of an irreducible automorphism which is not fully irreducible is as follows: take a surface with $p>1$ punctures. Consider a pseudo-Anosov map on the surface which cyclically permutes the punctures. Then the outer automorphism induced on the fundamental group is irreducible but not fully irreducible. In fact, any example of exponential growth arises in this way. 

   Fully irreducible is often referred to as `iwip' - irreducible with irreducible powers - in the literature. 
   \end{rem}    

    There is an algorithm to detect whether $\Phi \in \Out(F_n)$ is fully irreducible given in \cite{Kapovich_algo}, and whether it is irreducible in \cite{Francaviglia2022}. A more recent algorithm given in \cite{kapovich_detecting_2019} decides whether $\Phi$ is fully irreducible in polynomial time.


    \subsection*{Growth}
    
    A second set of conjugacy invariant, and decidable, properties is related to growth.

\begin{defn}
    Given $\phi\in\Phi \in \Out(F_n)$ and $g \in F_n$ and a fixed basis $X$ of $F_n$ we define the growth rate of $g$ as follows:

    \begin{itemize}
        \item The element $g$ has polynomial growth of degree at most $d$ under $\Phi$ if $\|g\phi^n\|_X = O(n^d)$. 
        \item The element $g$ has exponential growth if there exists $\lambda >1$ for which $ \lambda^n = O(\|g\phi^n\|_X )$. 
\footnote{Note that (for non-negative functions) $f(n) = O(g(n))$ means $f(n) \leq M g(n)$ for some constant $M$. The reason for the asymmetric definition of growth is that the idea to bound polynomial growth from above and exponential growth from below. It is easy to see that there is always an exponential upper bound.}

    \end{itemize}

\end{defn}

 This growth rate is independent of the choice of $X$ and $\phi \in \Phi$. 
Moreover, by \cite{Bestvina1992},  the growth rate of every $g\in F_n$  is  either at least exponential or at most polynomial.

\begin{defn}
An outer-automorphism  $\Phi$ of $F_n$ is said to have
\begin{itemize}
    \item \emph{polynomial growth} if all elements $g\in F_n$ have polynomial growth rate,
    \item \emph{exponential growth} if \emph{some} element $g \in F_n$ has exponential growth rate. 
\end{itemize}
\end{defn}
 In the case of a polynomial growth outer-automorphism of $F_n$, there exists $d\geq 0$ for which all elements of $F$ have  polynomial growth of degree at most $d$, and moreover, the smallest such $d$ satisfies  $d \leq n-1$,  see \cite{Bestvina2005}, \cite{Levitt2009}.

\begin{defn}
    A polynomial growth outer-automorphism of $F_n$ is \emph{unipotent} if it induces an automorphism of the abelianisation $\mathbb{Z}^n$ that is represented by a unipotent matrix.   
\end{defn}

Such a matrix of the abelianisation of an automorphism of $F_n$ of polynomial growth only has eigenvalues of modulus $1$, and in in $GL(n, \mathbb{Z})$, so 
its $|GL(n, \mathbb{Z}/3)|$-th power is unipotent.

    The following is known to specialists, and important to us.  We explain a proof, using the theory of CT maps \cite{Feighn_Handel_2018}, that are certain homotopy equivalences on graphs,  representing outer-automorphisms on their fundamental groups; the interested reader is referred to this reference.

    \begin{prop}[\cite{Feighn2011, Feighn_Handel_2018}] \label{prop:alggrowth}
    Given an automorphism, $\Phi  \in \Out(F_n)$, there is an algorithm to decide: 
    \begin{itemize}
        \item If $\Phi$ has polynomial or exponential growth and, 
        \item If $\Phi$ has polynomial growth, decide the degree of polynomial growth. 
    \end{itemize}
\end{prop}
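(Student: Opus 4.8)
The plan is to reduce both questions to reading off combinatorial data from a good topological representative. First I would pass to a rotationless power $\Phi^M$ of $\Phi$; such an $M$ exists and can be chosen effectively, and since $\|g\phi^{Mn}\|_X$ and $\|g\phi^{n}\|_X$ determine the same growth type and the same polynomial degree, nothing is lost. I would then invoke the algorithmic construction of a \emph{CT} (completely split relative train track) map $f\colon G\to G$ representing $\Phi^M$, together with its filtration $\emptyset=G_0\subset G_1\subset\cdots\subset G_k=G$ by $f$-invariant subgraphs, as provided by \cite{Feighn2011,Feighn_Handel_2018}. Each stratum $H_i=\overline{G_i\setminus G_{i-1}}$ carries a transition matrix $M_i$ recording how the edges of $H_i$ cross the edges of $H_i$ under $f$, and these matrices and the edge-image words are explicitly available from the construction.

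The first dichotomy is then immediate to decide. A stratum $H_i$ is \emph{exponentially growing} precisely when $M_i$ is irreducible with Perron--Frobenius eigenvalue $\lambda_i>1$, a condition checkable from the integer matrix $M_i$. By the structure theory of relative train track maps, $\Phi$ has exponential growth if and only if at least one stratum of the CT is exponentially growing; otherwise every stratum is non-exponentially-growing (NEG) or a zero stratum, and $\Phi$ has polynomial growth (consistent with the exponential-or-polynomial dichotomy of \cite{Bestvina1992}). Thus scanning the spectral radii of the $M_i$ settles the first bullet.

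For the second bullet, suppose all strata are NEG; after passing to the rotationless power we may assume $\Phi^M$ is unipotent polynomially growing, so that each relevant NEG edge $E_i$ satisfies $E_i f = E_i\cdot u_i$ with $u_i$ a closed path lying in the lower level $G_{i-1}$. Iterating, the image of $E_i$ under $f^n$ is $E_i\,u_i\,(u_i f)\cdots(u_i f^{n-1})$, whose length is comparable to $\sum_{j<n}\|u_i f^j\|_X$; hence the growth degree of $E_i$ equals $1+\deg(u_i)$ when $u_i$ is nontrivial and $0$ when $u_i$ is trivial, where $\deg(u_i)$ is the maximum growth degree of the edges occurring in $u_i$. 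Since every edge of $u_i$ has filtration level strictly below $i$, the dependency relation $E_i\rightsquigarrow E_\ell$ (for $E_\ell$ occurring in $u_i$) is acyclic, and the degree of $E_i$ is the length of the longest descending chain, computable by a finite dynamic-programming sweep up the filtration. The polynomial degree of $\Phi$ is then the maximum of these edge-degrees, as the growth of an arbitrary conjugacy class is governed by the highest-degree edge it crosses.

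The main obstacle is not the reading-off step but the input to it: one must rely on the effectiveness of the CT construction of \cite{Feighn2011,Feighn_Handel_2018} and verify that the completely-split structure makes the words $u_i$ and the matrices $M_i$ genuinely computable, so that both the eigenvalue test and the degree recursion operate on explicit data. A secondary point needing care is the estimate $\|E_i f^n\|_X\asymp\sum_{j<n}\|u_i f^j\|_X$, namely that no cancellation occurs in the concatenation; this is exactly what the train track (no-cancellation) property of the CT map guarantees, and is the reason one works with such a representative rather than an arbitrary homotopy equivalence.
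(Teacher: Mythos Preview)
Your approach is essentially identical to the paper's: pass to a rotationless power, build a CT representative algorithmically via \cite{Feighn_Handel_2018}, detect exponential growth by the presence of an EG stratum, and in the polynomial case compute edge-growth degrees inductively from the splittings $f(E_i)=E_i\cdot u_i$.

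One point does need attention, however. Your final assertion that the polynomial degree of $\Phi$ equals the maximum of the edge-degrees is not quite right. As the paper explicitly notes (citing \cite[Lemma~2.16]{macura_detour_2002} and \cite[Lemma~2.3]{Andrew2022a}), the identity outer automorphism can be represented by a CT map in which some edge grows linearly; so the maximum edge-degree can be $1$ while the true degree of $\Phi$ is $0$. The edge-degree criterion is correct for $d>1$, but the distinction between degree $0$ and degree $1$ must be handled separately. The fix is exactly what the paper does: having passed to a rotationless (hence UPG) power, degree $0$ occurs if and only if this power is the identity, which is immediately checkable. A minor related point: you allow for zero strata in the polynomial case, but the paper invokes \cite[Theorem~2.19]{Feighn2011} to observe that in the absence of EG strata there are no zero strata, so every stratum really is NEG and your recursion applies to all edges.
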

\begin{proof}
By \cite{Feighn_Handel_2018}, there is an algorithm to construct a CT map (which is, in particular, a relative train track map) representing a rotationless power of $\Phi$. Since the properties needed here are invariant under taking positive powers, we may as well assume that $\Phi$ itself is so-called rotationless (the power needed can be determined in advance). In particular, a rotationless automorphism of polynomial growth would be unipotent, see \cite[Lemma 4.2.2]{Feighn2011}.

So take a CT map, $f: G \to G$ representing $\Phi$. 

It then follows immediately that if $f$ admits an exponential stratum then $\Phi$ will have exponential growth. So we can algorithmically distinguish between exponential and polynomial growth. 

So if $\Phi$ has polynomial growth, then all strata will be NEG (non-exponentially growing); note that by \cite{Feighn2011}, Theorem 2.19, there will be no zero strata in the absence of exponential strata. Then Lemma 4.2 of \cite{Feighn2011} shows that each such NEG stratum consists of a single edge, $E$, such that $f(E) = E \cdot u$, where the dot denotes a \emph{splitting}. In particular, the growth of $f^k(E)$ is polynomial of one degree higher than that of $u$. Hence, by induction, we can determine the growth of every edge. 

This almost determines the growth of the automorphism, $\Phi$, in the sense that $\Phi$ has polynomial growth of degree $d >1$ if and only if some edge grows polynomially with degree $d$. However, it is possible to represent the identity map as a CT map where some edge grows linearly and this is the only exception. See \cite[Lemma 2.16]{macura_detour_2002} and \cite[Lemma 2.3]{Andrew2022a}. 

However, since we have already passed to a rotationless power (in particular it will be UPG), our automorphism will have growth of degree 0 if and only if it represents the identity and this is immediately checked. 

\end{proof}

For an outer-automorphism of $F_n$ of exponential growth, there exists a canonical finite collection of conjugacy classes of finitely generated subgroups of $F_n$, such that an element $g\in F_n$  has polynomial growth rate if and only if it is conjugate in one of these subgroups \cite{Levitt2009}. Those are the maximal polynomially growing subgroups. 

Given $H$ such a maximal subgroup, and $\phi \in \Phi$, there is an integer $m> 0$, and $g\in F_n$ for which $H( \phi^m \Ad (g))=H$. The automorphism $\phi^m \Ad (g)$ of $H$ is then a polynomial growth automorphism. We again refer to \cite{Levitt2009}. Through standard arguments of quasi-isometry, its polynomial degree of growth rate does not depend on $\phi \in \Phi$, $m$ and $g$ satisfying the above. We thus call it the degree of growth of $\Phi$ on $H$. 

\begin{prop}\label{prop2.8}
    There is an algorithm that, given $\Phi \in \Out(F_n)$ of exponential growth, computes a basis for each conjugacy-representative of  maximal polynomially growing subgroup of $F_n$, and the degree of growth of $\Phi$ on it.
\end{prop}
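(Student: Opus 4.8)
The plan is to reduce the computation to the relative train track machinery already invoked for Proposition~\ref{prop:alggrowth}, and then to feed the restricted automorphisms back into that proposition to read off the degrees. First I would replace $\Phi$ by a rotationless power $\Psi=\Phi^{p}$: as in the proof of Proposition~\ref{prop:alggrowth} the exponent $p$ can be computed in advance, and neither the collection of maximal polynomially growing subgroups nor the degree of growth on each of them changes when passing to a positive power (an element grows polynomially of degree $d$ under $\Phi$ if and only if it does so under $\Psi$, and conjugacy into a subgroup is insensitive to taking powers). I would then construct, via \cite{Feighn_Handel_2018}, a CT map $f\colon G\to G$ representing $\Psi$, together with its filtration and the list of its strata, each flagged as exponential (EG) or NEG.

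The heart of the argument is to extract the maximal polynomially growing subgroups from this data. By \cite{Bestvina1992, Levitt2009} an element grows polynomially exactly when its circuit is not stretched by the exponential strata; concretely, in a CT map the polynomially growing loops are built from the NEG edges (each satisfying $f(E)=E\cdot u$ with $u$ a path of strictly lower growth) together with the closed indivisible Nielsen paths sitting inside the EG strata, which are the non-growing ``boundary'' loops, exactly as the loop $\partial T$ in Example~\ref{ex;TwedgeC}. A CT map makes all of these objects explicit, since this is precisely the content of its complete splittings, so I would assemble, for each component of the relevant invariant sub-structure, the subgroup generated by the corresponding NEG edges and closed Nielsen paths, and a spanning-tree computation in $G$ then yields an explicit basis of each such subgroup. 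The substantive point to verify is that this finite family coincides with Levitt's canonical family of maximal polynomially growing subgroups, that is, that it carries every polynomially growing element and that the subgroups produced are genuinely maximal. I expect this to be the main obstacle, and one proves it using the carrying properties of the attracting laminations relative to the CT filtration.

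Finally, for the degrees, given an explicit basis of such a subgroup $H$ I would locate a pair $(m,g)$ with $H(\psi^{m}\Ad(g))=H$ for $\psi\in\Psi$ by searching over $m\geq 1$ and over $g$ in balls of increasing radius, testing each candidate with a Stallings-fold subgroup-equality check; the search terminates because \cite{Levitt2009} guarantees such a pair exists. This realizes the restriction $\psi^{m}\Ad(g)|_{H}$ as an explicit automorphism of the free group $H\cong F_{\rank H}$, which is of polynomial growth, so applying Proposition~\ref{prop:alggrowth} to its outer class computes its degree; by definition this is the degree of growth of $\Phi$ on $H$.
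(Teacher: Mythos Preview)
Your CT-map approach is different from the paper's, which is much shorter: the paper observes that when $\Phi$ has exponential growth the mapping torus $F_n\rtimes_\phi\mathbb{Z}$ is properly relatively hyperbolic, with peripheral subgroups precisely the mapping tori of the restrictions of $\Phi$ to the maximal polynomially growing subgroups (Theorem~\ref{thm;relhypcori}, from \cite{ghosh_relative_2023, DamaniLi2020}). The algorithm of Dahmani--Guirardel \cite{DahmaniGuirardel2013} then computes generating sets for the peripheral subgroups of a relatively hyperbolic group, and intersecting each peripheral with the fibre $F_n$ (the kernel of the projection to $\mathbb{Z}$) produces a basis of each maximal polynomially growing subgroup directly. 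The degree is then read off exactly as you propose, by feeding the restriction into Proposition~\ref{prop:alggrowth}. This route completely bypasses any analysis of how the polynomially growing subgroups sit inside a CT representative.

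Your proposal, by contrast, has a genuine gap at precisely the point you yourself flag as ``the main obstacle'': you never actually verify that the subgroups you assemble coincide with Levitt's canonical family, and the one-line gesture towards ``carrying properties of the attracting laminations'' is not a proof. Worse, the description of what to assemble is not correct as stated. An NEG edge $E$ with $f(E)=E\cdot u$ can perfectly well grow exponentially when $u$ crosses an EG stratum below it in a non-Nielsen way --- this is exactly the situation of Example~\ref{ex;TwedgeC} when $\ell$ is not a power of $\partial T$ --- so ``NEG edges together with closed indivisible Nielsen paths'' is too large a collection. Pinning down the right sub-object (roughly the components spanned by fixed and linear edges glued along the indivisible Nielsen paths of the EG strata, in the spirit of the Handel--Mosher non-attracted subgroup system) and proving it gives exactly the maximal polynomially growing subgroups is real work that you have not done. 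The paper's relative-hyperbolicity argument avoids all of this.
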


Proposition~\ref{prop2.8} follows from the relative hyperbolicity of $F_n \rtimes \mathbb{Z}$ given by $\Phi$ \cite{ghosh_relative_2023, DamaniLi2020}, and the computability of its peripheral subgroups \cite{DahmaniGuirardel2013}. This detects the maximal polynomially growing subgroups of $F_n$. Then, Proposition \ref{prop:alggrowth} determines the induced growth on each of them. 

\subsection*{Lamination carriers are computable}
A more involved conjugacy invariant for outer-automorphisms of free groups that have exponential growth is that of the carrier of the attracting laminations \cite[Section 3]{Bestvina2000}. This is a specific conjugacy class of free factor systems of $F_n$, that in some sense, attracts all the  exponential growth of the automorphism. 

More concretely, let $\partial F_n$, denote Gromov boundary of $F_n$, which for a free group is the same as the set of ends. Let $\widetilde{\mathcal{B}} = (\partial F_n \times \partial F_n - \Delta)/\Z_2$; where $\Delta$ is the diagonal subset of $\partial F_n \times \partial F_n$, and the $\Z_2$ action is via interchanging the coordinates. That is, $\widetilde{\mathcal{B}}$ can be thought of as the set of unordered pairs of distinct points on the boundary of $F_n$; morally, this is the set of lines in $F_n$. The action of $F_n$ on its boundary extends to an action on $\widetilde{\mathcal{B}}$. 

We then let $\mathcal{B} = \widetilde{\mathcal{B}}/F_n$. We say that $\beta \in \mathcal{B}$ is \emph{carried} by (the conjugacy class of) 
a free factor $K$, if $\beta$ lies in the image of $\partial K \times \partial K \to B$. A subset, $S$, of $\mathcal{B}$ is carried by a free factor system, $\mathcal{F}$, if every element of $S$ is carried by some free factor in $\mathcal{F}$. 

An \emph{attracting lamination} for $\Phi$ is then a closed subset of 
$\mathcal{B}$ which is the closure of a single point, $\beta$, and has some extra properties ($\beta$ is birecurrent, admits an attracting neighbourhood for some positive iterate of $\Phi$, and is not carried by a $\Phi$-periodic free factor of rank one - see \cite[Definition 3.1.5]{Bestvina2000}). 

Any automorphism, $\Phi$, then admits a finite set of attracting laminations, $\mathcal{L}(\Phi)$ which is carried by a $\Phi$-invariant free factor system (\cite[Lemmas 3.1.6 and 3.1.13]{Bestvina2000}). Morally, one has one attracting lamination for every exponential stratum for a relative train track representative for $\Phi$ (or some power) and moreover $\mathcal{L}(\Phi)$ is canonical. 

For instance, a fully irreducible automorphism will have a single attracting lamination which is only carried by the whole group. A polyonomially growing automorphism will have no attracting laminations.

Again the theory of CT maps allows to compute this invariant. We would like to thank Mark Feign and Michael Handel for the proof of the following Proposition.

\begin{prop}\label{prop;tt_for_lamination_ffs}
    Given an automorphism, $\Phi \in \Out(F_n)$ of exponential growth, there is an algorithm to find the smallest free factor system which carries the set of attracting laminations. 
\end{prop}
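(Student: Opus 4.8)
The plan is to run the completely split relative train track (CT) machinery already used for Proposition~\ref{prop:alggrowth}. First I would replace $\Phi$ by a rotationless positive power and build, via \cite{Feighn_Handel_2018}, a CT map $f\colon G\to G$ representing it. This is harmless because both the finite set of attracting laminations and the free factor system we are after are invariants of $\Phi$ that are unchanged under passage to positive powers (a line is attracted to a lamination under $\Phi$ iff it is under $\Phi^k$, and the carrying free factor system is the same). For a CT the attracting laminations in $\mathcal{L}(\Phi)$ are in canonical bijection with the exponentially growing (EG) strata $H_{r_1},\dots,H_{r_k}$ of $f$ \cite[Section 3]{Bestvina2000}, and a leaf of the lamination $\Lambda_i$ attached to $H_{r_i}$ is realized as a limit $\lim_m f^m(E)$ of $f$-iterates of an edge $E\subset H_{r_i}$. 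The key structural feature I would exploit is that such a leaf is a \emph{substitutive} bi-infinite line: although it is an infinite object, its combinatorics are completely encoded by the finite data of $f$, so algorithms can run on finite approximations of it.

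Next I would reduce the problem to a per-lamination computation. The free factor systems carrying a fixed set of lines form an up-set that is closed under meet, so there is a unique smallest one \cite{Bestvina2000}; moreover, for a single (minimal) attracting lamination this smallest carrier is a single conjugacy class of free factor $[K_i]$, since a minimal lamination cannot be split across a nontrivial free factor system. Thus the task splits into (a) computing, for each $i$, the free factor support $[K_i]$ of $\Lambda_i$, and (b) assembling the $[K_i]$ into the smallest free factor system carrying them all, i.e. their join in the lattice of free factor systems. Note that the CT filtration already exhibits an explicit carrier of $\Lambda_i$, namely $[\pi_1(G_{r_i})]$ for the filtration subgraph $G_{r_i}$ in which the leaves of $\Lambda_i$ live, so in step (a) I have an a priori upper bound to work below.

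To carry out (a) I would approximate $\Lambda_i$ by the nested subgraphs $G_{p_m}\subseteq G_{r_i}$ crossed by the growing leaf segments $f^m(E)$; the free factors $[\pi_1(G_{p_m})]$ increase and, being bounded in rank by $n$, stabilize to the free factor $[\pi_1(G_R)]$ carried by the full leaf. This gives a computable carrier, which I would then minimize to the true support $[K_i]$ by a Whitehead-type minimization inside $\pi_1(G_R)$: testing whether $\Lambda_i$ descends to a proper free factor and iterating. Deciding ``$\Lambda_i$ is carried by a free factor $K$'' is where the substitutive structure is indispensable. Realizing $K$ as a subgraph $G'$, non-carrying is recursively certified by a finite leaf segment that cannot be homotoped into $G'$; carrying, conversely, is certified because the leaf is generated by iterating the fixed map $f$, so it suffices to inspect segments up to a length bounded in terms of $f$ and $G'$, after which self-similarity forces the whole leaf to be carried. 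Step (b), the join of the finitely many single-factor supports $[K_i]$, is then computable by the same free-factor-support tools applied to the (finite) union of their bases.

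The step I expect to be the main obstacle is precisely the passage from the infinite lamination to a finite, verifiable carrying certificate: extracting from the train track data of $f$ an effective bound that turns the semi-decision ``not carried'' into a genuine decision, and thereby justifying both that the approximating supports stabilize to the correct value and that the downward minimization terminates at the true support $[K_i]$. Everything else (constructing the CT, reading off the EG strata and their leaves, computing smallest free factors containing finite subgraphs, and taking the final join) assembles from tools already available; the heart of the argument, and what the CT formalism of \cite{Feighn_Handel_2018} is meant to supply, is this effective control of the infinite lamination by finite combinatorial data.
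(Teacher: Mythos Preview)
Your setup is right and matches the paper: pass to a rotationless power, build a CT by \cite{Feighn_Handel_2018}, and work one EG stratum at a time. The gap is exactly where you say it is, and you do not close it. Your proposed mechanism---Whitehead-type minimisation, deciding for a candidate free factor $K$ whether the infinite substitutive leaf is carried by $K$ via ``segments up to a length bounded in terms of $f$ and $G'$''---is asserted but not established. Two concrete problems: first, the candidate $K$ need not be visible as a subgraph of the fixed CT graph $G$, so realising $K$ as $G'$ forces a change of marking that destroys the train-track structure underlying your hoped-for bound; second, even granting such a bound for one $K$, a Whitehead-style descent requires testing against an a priori infinite family of candidate free factors, and you give no termination argument. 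Your preliminary ``approximation by subgraphs $G_{p_m}$'' only recovers the obvious carrier $[\pi_1(G_{r_i})]$ (the leaf crosses every edge of the irreducible EG stratum), so it does no real work toward the minimal support.

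The paper sidesteps the entire ``is this infinite line carried by $K$?'' decision problem. Instead of testing carriers of the leaf directly, it replaces $\Lambda$ by a sequence of \emph{conjugacy classes}: pick a leaf segment that begins and ends with the same oriented EG edge, close it up to an immersed loop $\gamma$, and set $\gamma_i=[f^i(\gamma)]$. The free factor support of any finite set of conjugacy classes is computable by \cite[Lemma~4.2]{Feighn_Handel_2018}, and the increasing chain $F(\{\gamma_0\})\subseteq F(\{\gamma_0,\gamma_1\})\subseteq\cdots$ stabilises for rank reasons. The substantive claim, proved via bounded cancellation in the universal cover, is that this stable value equals $F(\Lambda)$: any $\Phi$-invariant free factor carrying $\Lambda$ must, for $i$ large, also carry $\gamma_i$, because the overlapping translates of the lifted leaf that produce $\gamma_i$ are forced into the same elevation of the carrier. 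This is the idea you are missing: convert the lamination to finitely many loops so that the already-available free-factor-support algorithm for elements does all the work.
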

\begin{proof}
    Since the set of attracting laminations is stable under taking positive powers, we are free to replace $\Phi$ with a (rotationless) power and use \cite{Feighn_Handel_2018} to algorithmically produce a CT map representing this power of $\Phi$. Since we are dealing with a rotationless automorphism, each exponential stratum produces exactly one attracting lamination. It is therefore sufficient to produce an algorithm which, given this CT map and a specific exponential stratum, $H_k$ of it, finds the smallest free factor carrying $\Lambda$, the associated attracting lamination. 

  Let $f : G \mapsto G$ be the CT map for the power of $\Phi$ and let $H_k$ be an exponential stratum with corresponding attracting lamination $\Lambda$. Construct a leaf $L$ of $\Lambda$ by choosing a point fixed by $f$ in the interior of an edge $e$ of $H_k$ and considering: 

  $$
e \subset f(e) \subset f^2(e) \subset \ldots 
$$
Note that $L$ is $f$-invariant.

Choose a segment of $L$ that starts and ends with a copy of the same oriented edge of $H_k$ and with $e$ interior to the segment. Glue the initial and terminal edges of the segment to form a loop $\gamma$ immersing to G. For $i \ge 0$, denote by $\gamma_i$ the immersed loop (or conjugacy class) $[f^i(\gamma)]$. Note that $\gamma_i$ is obtained from $L$ by gluing segments of the form $[f^i(edge)]$. In particular, these segments are long if $i$ is big.

\medskip

\noindent
\underline{Claim}: The free factor support $F(\Lambda)$ of $\Lambda$ is equal to the free factor support, $F(\cup_{i\ge 0} \{\gamma_i \})$ of 

$$
\cup_{i\ge 0} \{\gamma_i \}.
$$
Comment: The free factor support of a finite collection of elements can be found algorithmically - see \cite[Lemma 4.2]{Feighn_Handel_2018}. Moreover, sequence of free factor supports of $\{\gamma_1\}, \{\gamma_1, \gamma_2\}, \ldots $ stabilises. Hence, for an algorithm, it is enough to prove the claim.

\noindent
Proof of Claim: That $ F(\Lambda) \subset F(\cup_{i\ge 0} \{\gamma_i \})$  is clear.

For the other direction, we will show that $\gamma_i$ is contained in every $\Phi$-invariant free factor $F’$ that carries $\Lambda$. It is enough to assume that $i$ is large.

Let $G’$ be a marked graph with sub-graph $K’$ representing $F’$. Let $\tilde K’ \subset \tilde G’$ and $\tilde G$ denote universal covers. View $L$ (defined above) as a subset of $\tilde G$ such that its representation in $\tilde G’$ is a subset of $\tilde K’$. Let $T$ denote the covering translation of $G$ that identifies the segments of $L$ that give $\gamma_i$.

If lines in $\tilde G$ have long (depending on $G’$) overlap then their representations in $G’$ intersect - this is a consequence of the bounded cancellation lemma. Let $L’$ denote the representation in $\tilde G’$ of $L$ and $T’$ denote the covering translation of $\tilde G’$ corresponding to $T$. In particular, $L’$ and $T’(L’)$ intersect. Since $L’$ is in $\tilde K’$ and intersects $T’(L’)$ the union of $L’$ and $T’(L’)$ is contained in $\tilde K’$ - this follows since translates of $\tilde K’$ are either equal or disjoint.

More generally the union as $j$ ranges over the set of integers of $T’^j(L’)$ is connected, contained in $\tilde K’$, and $T’$-invariant. Hence $\gamma_i$ is carried by $F’$.
\end{proof}

\section{The case of a given invariant rank 2 free factor}\label{sec:rank2_reduction}

    For a free factor $K\leq F$ we define $$\Out(F,K) = \{ \Phi \in \Out(F) : \textrm{$K$ is $\Phi$-invariant}\}.$$ 

    For any $\Phi \in \Out(F,K)$, the restriction to $K$ is not well defined as an automorphism, but it is well defined as an outer-automorphism, because $K$ is its own normaliser in $F$. 
    
    This section is devoted to the proof of the following.

    \begin{thm}\label{thm:rank_2_reducible}
        Let $K\leq F_3$ be a free factor of rank 2. 
        Then conjugacy problem in $\Out(F_3,K)$  for pairs elements that induce infinite order outer-automorphisms of $K$, is decidable.

    \end{thm}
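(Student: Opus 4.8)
The plan is to study the restriction homomorphism $r\colon \Out(F_3,K)\to \Out(K)\cong \mathrm{GL}(2,\Z)$, $\Phi\mapsto \Phi|_K$, which is well defined because $K$ is self-normalising. Writing $F_3=K*\langle c\rangle$, any class in $\ker r$ has a representative fixing $K$ pointwise, and such a representative is forced to send $c\mapsto k_1 c^{\pm1}k_2$ with $k_1,k_2\in K$; since the centraliser of $K$ in $F_3$ is trivial, distinct such data give distinct outer classes, so one checks that $\ker r\cong (F_2\times F_2)\rtimes\Z/2$, the two free factors recording left/right transvections of $c$ and the $\Z/2$ recording the inversion $c\mapsto c^{-1}$ (which swaps the two factors). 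I would therefore work with the short exact sequence $1\to (F_2\times F_2)\rtimes\Z/2\to \Out(F_3,K)\xrightarrow{r}\mathrm{GL}(2,\Z)\to 1$.

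Given input $\Phi,\Psi$, I first use the relative train track / CT technology of Section~\ref{sec:rank2_reduction}'s preliminaries to represent $\Phi$, confirm $K$ is invariant, and read off $q:=r(\Phi)$ and $r(\Psi)$. A conjugacy $\Phi^g=\Psi$ forces $r(\Phi)^{r(g)}=r(\Psi)$, so I solve conjugacy in $\mathrm{GL}(2,\Z)$ (classical, decidable, with computable conjugators and centralisers); if the images are not conjugate I answer \textbf{no}, and otherwise, after replacing $\Psi$ by a conjugate, I may assume $r(\Phi)=r(\Psi)=q$. Any remaining conjugator then has $r(g)\in Z_{\mathrm{GL}(2,\Z)}(q)$, so all potential conjugators lie in $G_q:=r^{-1}\bigl(Z_{\mathrm{GL}(2,\Z)}(q)\bigr)$ and the problem becomes conjugacy of $\Phi,\Psi$ inside $G_q$. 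Here the hypothesis that $q$ has \emph{infinite order} is decisive: the centraliser of an infinite-order element of $\mathrm{GL}(2,\Z)$ is virtually cyclic, so $G_q$ is virtually $(F_2\times F_2)\rtimes\Z$, the $\Z$ acting factorwise through the infinite-order automorphism of $F_2$ attached to a generator of the centraliser. (For finite-order $q$, e.g. $q=\pm I$, this centraliser is all of $\mathrm{GL}(2,\Z)$ and the reduction collapses, which is exactly why those cases are excluded and treated by the other methods of the paper.)

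Fixing a lift $\tilde q$ and writing $\Phi=\tilde q\,n_\Phi$, $\Psi=\tilde q\,n_\Psi$ with $n_\Phi,n_\Psi$ in the fibre, conjugating by a fibre element turns the equation into a twisted-conjugacy relation in each coordinate, i.e. a pair of twisted-conjugacy problems in $F_2$ for the automorphism induced by $q$, while moving the conjugator in the centraliser direction contributes the $\Z$ of the free-by-cyclic groups. Thus I realise $G_q$ (virtually) as a fibre product $(F_2\rtimes_\theta\Z)\times_\Z(F_2\rtimes_\theta\Z)$ over the common cyclic quotient. Conjugacy preserves the image in this common $\Z$, which $\Phi$ and $\Psi$ share, so the question reduces to finding a single conjugator whose $\Z$-coordinate is admissible in both factors simultaneously.

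For each free-by-cyclic factor I invoke the solution of the conjugacy problem in free-by-cyclic groups — via decidability of twisted conjugacy in free groups, computed through train track representatives of the automorphism attached to $q$ — not merely to decide conjugacy but to produce the full conjugator set, a coset of a computable centraliser; projecting to the common $\Z$ gives a coset $s_i+d_i\Z$ of admissible shifts for factor $i$. Then $\Phi,\Psi$ are conjugate in $\Out(F_3,K)$ exactly when $(s_1+d_1\Z)\cap(s_2+d_2\Z)$ is nonempty and compatible with the finitely many orientation/torsion choices coming from the $\Z/2$ and the finite part of the centraliser — a linear-congruence condition that is decidable. The main obstacle is precisely this coupling: I must compute centralisers and conjugator cosets in the two free-by-cyclic factors \emph{effectively} and intersect the resulting $\Z$-cosets, distinguishing only cosmetically between the hyperbolic (fully irreducible) and parabolic (linearly growing) types of $q$. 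The infinite-order hypothesis is what keeps $Z_{\mathrm{GL}(2,\Z)}(q)$ virtually cyclic, so that the coupling parameter is a single $\Z$ and the search terminates; the preliminary reductions — the exact sequence, the identification of $\ker r$, and the algorithmic accessibility of all lifts and centraliser generators — are comparatively routine but must be carried out carefully.
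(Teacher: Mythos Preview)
Your argument is essentially correct and reaches the goal, but by a different and longer route than the paper's. You work with the surjection $r\colon\Out(F_3,K)\to\Out(K)$, whose kernel you correctly identify as $(F_2\times F_2)\rtimes\Z/2$. The paper instead observes (Lemma~\ref{semidirect}) that $\Out(F_3,K)\cong C_2\ltimes(\Aut(F_2)\ltimes F_2)$: by recording the induced \emph{genuine} automorphism of $K$ rather than just its outer class, one of your two $F_2$ factors is absorbed into the quotient, leaving a single $F_2$ as kernel. After normalising via conjugacy in $\Aut(F_2)$ and the explicit centraliser description of Theorem~\ref{conjandcentraliser}, the paper lands directly on a \emph{single} twisted-conjugacy instance $u\sim_{\phi_0}v$ in $F_2$ (plus a second one when $\epsilon=-1$), with no coupling between coordinates to manage. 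Your fibre-product picture $(F_2\rtimes_\theta\Z)\times_\Z(F_2\rtimes_\theta\Z)$ is the price of using $\Out(F_2)$ instead of $\Aut(F_2)$.

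There is one step you should not gloss over. To intersect the admissible-shift cosets $s_i+d_i\Z$ you need the image in $\Z$ of the centraliser of an element $(w,m)$ in $F_2\rtimes_\theta\Z$, not merely a yes/no conjugacy answer; \cite{BMMV} does not package this directly. It is recoverable: since $(w,m)$ lies in its own centraliser the image contains $m\Z$, so one tests $n=1,\dots,|m|$, and for each fixed $n$ the existence of a commuting $(v,n)$ unwinds to a twisted-conjugacy problem $w\sim_{\theta^m}w\theta^n$ in $F_2$, decidable by Theorem~\ref{twistedconj}. With that made explicit your approach goes through; the paper's choice of $\Aut(F_2)$ simply makes this entire coupling step disappear.
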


It will be enough to consider the case where $K = \langle a,b\rangle < F_3= \langle a, b, c\rangle$. Before the proof of Theorem~\ref{thm:rank_2_reducible} we estabish some needed result.

\begin{lem}
	\label{semidirect}
	Let $G=\Out(F_3,K)$, and  $\Chi \in G$. Then there exists a unique $\chi \in \Chi$ such that there exists  $\epsilon = \pm 1,  g \in \langle a,b \rangle$ and $\chi_0 \in \Aut(\langle a, b \rangle)$, satisfying

	\begin{equation}\label{eqn:K-inv}
	\chi: \left\{
	\begin{array}{ccl}
		c & \mapsto & g c^{\epsilon} \\
		b & \mapsto & b\chi_0 \\
		a & \mapsto & a \chi_0. \\
	\end{array} \right.
	\end{equation}
In particular, $G$ is isomorphic to an iterated semi-direct product, $C_2 \ltimes (\Aut(F_2) \ltimes F_2)$, where the $\Aut(F_2)$ action on $F_2$ is the natural one, and the $C_2$ action on $\Aut(F_2) \ltimes F_2$ is given by the involution which maps $(\chi_0, g)$ to $(\chi_0 \Ad(g), g^{-1})$.

\end{lem}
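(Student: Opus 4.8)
The plan is first to fix, inside each outer class $\Chi$, a distinguished automorphism of the form (\ref{eqn:K-inv}), and only then read off the group structure. Given $\Chi\in G=\Out(F_3,K)$ with $K=\langle a,b\rangle$, choose any $\phi\in\Chi$. Since $K$ is $\Chi$-invariant we have $K\phi=K\Ad(f)=f^{-1}Kf$ for some $f$, so replacing $\phi$ by $\phi\Ad(f^{-1})$ (still in $\Chi$) we may assume $K\phi=K$ \emph{setwise}. Then $\phi|_K=\chi_0\in\Aut(K)=\Aut(F_2)$, and because $K$ is its own normaliser the only remaining freedom preserving $K$ setwise is post-composition by $\Ad(k)$ with $k\in K$. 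Applying the abelianisation-type quotient $\pi\colon F_3\twoheadrightarrow F_3/\langle\langle K\rangle\rangle\cong\Z$ (which kills $K$ and sends $c\mapsto 1$), $\phi$ induces $\pm1$ on $\Z$; call this $\epsilon$. Writing $W:=c\phi$, this gives $\pi(W)=\epsilon=\pm1$, and since $\{a\chi_0,b\chi_0,W\}$ is a basis of $F_3$ with $\langle a\chi_0,b\chi_0\rangle=K$, we get a free factorisation $F_3=K*\langle W\rangle$. Thus existence in the Lemma reduces to the normal-form claim that \emph{$W$ is conjugate by an element of $K$ to $gc^{\epsilon}$ for some $g\in K$.}

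\textbf{The core normal-form claim (the main obstacle).} This is where the real content lies. I would prove it via Fox calculus: by Birman's inverse-function theorem, $\{a,b,W\}$ is a basis of $F_3$ if and only if the Jacobian over $\Z[F_3]$ is invertible; because the first two rows are $(1,0,0)$ and $(0,1,0)$, this is equivalent to $\partial W/\partial c\in\Z[F_3]^{\times}$. Since $F_3$ is free (hence bi-orderable), $\Z[F_3]$ has only trivial units, so $\partial W/\partial c=\pm f_0$ for a single $f_0\in F_3$. Now write $W=k_0c^{n_1}k_1\cdots c^{n_m}k_m$ in reduced free-product normal form; the product rule gives $\partial W/\partial c=\sum_i (\text{prefix}_i)\,\partial(c^{n_i})/\partial c$, and one checks that the distinct prefixes lie in distinct left cosets of $\langle c\rangle$, so these contributions have pairwise disjoint supports and cannot cancel. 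Hence a single-term outcome forces $m=1$ and $|n_1|=1$, i.e.\ $W=k_0c^{\epsilon}k_1$, which is conjugated by $k_1^{-1}\in K$ to $(k_1k_0)c^{\epsilon}$. Replacing $\phi$ by $\phi\Ad(k_1^{-1})$ then yields a representative of the form (\ref{eqn:K-inv}). (A more geometric alternative would run a relative Whitehead/peak-reduction argument, or a Bass--Serre analysis of the two splittings $F_3=K*\langle c\rangle=K*\langle W\rangle$, to the same effect; but the translation ``single term $\Rightarrow$ single $c$-syllable'' is the step deserving care.)

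\textbf{Uniqueness.} Two representatives $\chi,\chi'\in\Chi$ of the form (\ref{eqn:K-inv}) differ by an inner automorphism, $\chi'=\chi\Ad(h)$, and preserving $K$ forces $h\in K$. Computing $c\chi'=(gc^{\epsilon})\Ad(h)=h^{-1}gc^{\epsilon}h$ and putting it in normal form gives a trailing $K$-syllable $h$ (for $\epsilon=+1$, the word $(h^{-1}g)\,c\,h$; similarly for $\epsilon=-1$). Since the prescribed form $g'c^{\epsilon}$ has no syllable after $c^{\epsilon}$, we must have $h=1$, so $\chi'=\chi$. This establishes the uniqueness asserted in the Lemma.

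\textbf{The semidirect structure.} Finally I would package the parametrisation as a group isomorphism. The assignment $\Chi\mapsto\epsilon$ is the action on $F_3/\langle\langle K\rangle\rangle\cong\Z$, hence a homomorphism $G\to\Aut(\Z)=C_2$. For $\epsilon=+1$ write $\theta(\chi_0,g)$ for the canonical representative $a\mapsto a\chi_0,\ b\mapsto b\chi_0,\ c\mapsto gc$; a direct composition check gives $\theta(\chi_0,g)\theta(\chi_1,h)=\theta(\chi_0\chi_1,(g\chi_1)h)$, which is exactly the product in $\Aut(F_2)\ltimes F_2$ with the natural action, so the kernel of $G\to C_2$ is identified (via existence and uniqueness) with $\Aut(F_2)\ltimes F_2$. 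The involution $\iota\colon c\mapsto c^{-1}$, $a,b$ fixed, splits the $C_2$; conjugating $\theta(\chi_0,g)$ by $\iota$ produces $c\mapsto cg^{-1}$, whose canonical form (obtained by the normalisation $\Ad(g)$ of the previous steps) is $\theta(\chi_0\Ad(g),g^{-1})$. This is precisely the prescribed involution $(\chi_0,g)\mapsto(\chi_0\Ad(g),g^{-1})$, yielding $G\cong C_2\ltimes(\Aut(F_2)\ltimes F_2)$.
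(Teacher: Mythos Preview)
Your proof is correct and follows the same overall arc as the paper's: normalise a representative to preserve $K$ setwise, show $c$ lands in $Kc^{\pm1}K$ and then in $Kc^{\pm1}$, deduce uniqueness from the fact that $K$ is self-normalising, and read off the iterated semidirect product.

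The one substantive difference is how you handle the ``core normal-form claim'' that $c\phi\in Kc^{\pm1}K$. The paper simply asserts this in one line, implicitly relying on the classical description of automorphisms of a free product $K*\langle c\rangle$ (Fouxe--Rabinovitch generators, or equivalently the classification of cyclic free complements of $K$). You instead give a self-contained argument via Birman's Jacobian criterion and the triviality of units in $\mathbb{Z}[F_3]$, reducing to the observation that the Fox derivative $\partial W/\partial c$ has $\sum_i|n_i|$ terms with pairwise disjoint supports. That disjointness check is correct (the prefixes $p_i=k_0c^{n_1}\cdots k_{i-1}$ lie in distinct right $\langle c\rangle$-cosets because the intermediate $k_i$ are nontrivial), so a single-term outcome indeed forces $m=1$, $|n_1|=1$. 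Your route is longer but has the advantage of not invoking any external structure theorem; the paper's route is terser but leans on background the reader is assumed to know. Your verifications of the multiplication in $\Aut(F_2)\ltimes F_2$ and of the $C_2$-action by conjugation by $\iota$ are also more explicit than the paper's, which just names the relevant normal subgroups and observes the obvious splittings.
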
	
			
	In the light of the decomposition of the lemma,  we can represent any  $\Out(F_3,K)$ uniquely as an ordered triple, $(\epsilon, \chi_0, g)$.  

\begin{proof}
    The outer automorphism $\Chi$ preserves the conjugacy class of $K = \langle a, b\rangle$, hence an element of $\Chi$ preserves $\langle a, b\rangle$; note that this element is only well-defined up to the normaliser of $\langle a, b\rangle$, which is again $\langle a, b\rangle$. 
    
    Then, $\Chi$ must send $c$ into $\langle a, b\rangle c^{\pm 1} \langle a, b\rangle$, and therefore (after composing with another inner automorphism by an element of $\langle a,b\rangle$) an element of $\Chi$ sends $c$ in $ \langle a, b\rangle c^{\epsilon}$, for $\epsilon \in \{-1, 1\}$. An inner automorphism preserving $\langle a, b\rangle$ and sending $c$ in $ \langle a, b\rangle c^{\pm 1}$ has to be trivial, and therefore we obtain the first part of the statement.  

    The set of $\Chi \in G$ for which $\epsilon = 1$ clearly forms a normal subgroup,  $G_1$. In $G_1$, the set of elements for which $\chi_0= Id$ is again a normal subgroup, $G_2$, isomorphic to $\langle a, b\rangle$. It is clear that the quotient $G_1/G_2$ (isomorphic to $\Aut(F_2)$) lifts in $G_1$, making $G_1 \simeq (\Aut(F_2) \ltimes F_2)$. Finally,       $G/G_1 \simeq \mathbb{Z}/2$ also clearly lifts in $G$, as the automorphism given by $\chi_0=1,  g=1, \epsilon = -1$, making $G$ an iterated semi-direct product. One verifies that the involution on $G_1$ given by the later lift is the one given in the statement.
\end{proof}

\begin{nt}~
	\begin{itemize}
		\item 	Let $G$ be a group and $H$ a subgroup. For $x,y \in G$ we write $x \sim_H y$ if there exists an $h \in H$ such that $x^h=y$. We do not require $x,y$ to be in $H$. 
		\item We say that $\sim_H$ is decidable if we can construct is an algorithm which decides on inputs $x,y \in G$, whether $x \sim_H y$. That is, $\sim_H$ is decidable means that it is a recursive subset of $G^2$, and we have an explicit algorithm to decide membership.
	\end{itemize}
\end{nt}	
	
\begin{lem}
	\label{finiteindex}
	Let $G$ be a group, $H$ a subgroup of $G$ and $H_0$ a finite index subgroup of $H$. Then if  $\sim_{H_0}$ is decidable and we can compute a complete set $h_1,\ldots,h_k$ of coset representatives of $H/H_0$ then so is $\sim_H$. 
\end{lem}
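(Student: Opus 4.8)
The plan is to reduce a single $\sim_H$-query to a bounded number of $\sim_{H_0}$-queries, exploiting the coset decomposition $H = \bigsqcup_{i=1}^{k} h_i H_0$. The starting observation is that every $h \in H$ factors as $h = h_i h_0$ for some representative $h_i$ and some $h_0 \in H_0$. Using the convention $x^h = h^{-1} x h$ (under which conjugation is a right action, $x^{gh} = (x^g)^h$), such an $h$ yields
\[
x^h = (h_i h_0)^{-1} x (h_i h_0) = h_0^{-1}\bigl(h_i^{-1} x h_i\bigr) h_0 = \bigl(x^{h_i}\bigr)^{h_0}.
\]
Consequently $x \sim_H y$ holds if and only if there is an index $i$ and an element $h_0 \in H_0$ with $\bigl(x^{h_i}\bigr)^{h_0} = y$, that is, if and only if $x^{h_i} \sim_{H_0} y$ for some $i \in \{1, \dots, k\}$.

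This equivalence dictates the algorithm. First I would, for each $i = 1, \dots, k$, form the conjugate $x^{h_i} = h_i^{-1} x h_i$ in $G$ (only products in $G$ are needed, which the computational setup supplies). Then I would run the decision procedure for $\sim_{H_0}$ on each pair $\bigl(x^{h_i}, y\bigr)$, and output \textbf{yes} exactly when at least one of the $k$ calls returns \textbf{yes}, and \textbf{no} otherwise. Termination is immediate, since there are finitely many representatives and each $\sim_{H_0}$-call halts by hypothesis; correctness is precisely the displayed equivalence.

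Since this is a clean finite-index reduction, I do not anticipate a substantive obstacle; the only points needing care are bookkeeping ones. The most important is to keep the coset convention compatible with the action convention: with the right action $x^h = h^{-1} x h$ and left cosets $h = h_i h_0$, the factorization above splits correctly, whereas pairing a right action with right cosets would place the representative on the wrong side and invalidate the identity. I would also note that nothing in the computation uses $x$ or $y$ lying in $H$, which is consistent with $\sim_H$ being defined as a subset of $G^2$ rather than $H^2$.
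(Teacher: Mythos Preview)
Your proof is correct and is essentially identical to the paper's own argument: both reduce $x \sim_H y$ to the finite disjunction $\exists i\ (x^{h_i} \sim_{H_0} y)$ via the coset factorisation $h = h_i h_0$. Your version is simply more explicit about the conjugation--coset bookkeeping and the fact that $x,y$ need not lie in $H$.
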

\begin{proof}
	Let $h_1, \ldots, h_k$ be computed set of coset representatives of $H_0$ in $H$. Then $x \sim_H y$ if and only if there exists an $i$ such that $x^{h_i} \sim_{H_0} y$.
\end{proof}

\begin{rem}
	Some care needs to be taken here. It is false - there are counterexamples - that if the conjugacy problem is solvable in a finite index subgroup, then it is solvable in the group. That is, there are examples of groups $G$, with finite index subgroups $H$ such that: 
	\begin{itemize}
		\item $\sim_H \cap H^2$ is recursive, 
		\item $\sim_G$ is not recursive. 
	\end{itemize}
	Hence conjugacy is solvable in $H$ but not in $G$. This doesn't contradict the Lemma above since we have the stronger hypothesis that $\sim_H$ is decidable.  
\end{rem}

The main tool we will use to solve conjugacy in $G$ is the twisted conjugacy problem. 

\begin{defn}
	Let $\phi \in \Aut(F_n)$. Then $x, y \in F_n$ are said to be twisted $\phi$ conjugate, written $x \sim_{\phi} y$ if there exists a $w \in F_n$ such that:
	$$
	x = (w \phi) y w^{-1}.
	$$
\end{defn}

We need the following later: 
\begin{lem}
	\label{powers}
	Let $\phi \in \Aut(F_n)$ and $x \in F_n$. Then for any $k \in \Z$, $x \sim_{\phi} x \phi^k$.
\end{lem}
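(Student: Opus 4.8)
The plan is to reduce the statement to the single base case $k=1$ and then propagate it using the fact that twisted $\phi$-conjugacy is an equivalence relation. First I would record the three facts about the right action that are used throughout: since $\phi$ is an automorphism, $(uv)\phi=(u\phi)(v\phi)$, $(u^{-1})\phi=(u\phi)^{-1}$, and $1\phi=1$. With these, I would verify that $\sim_{\phi}$ is an equivalence relation. Reflexivity follows by taking $w=1$; for symmetry, rewriting $x=(w\phi)yw^{-1}$ as $y=(w^{-1}\phi)\,x\,(w^{-1})^{-1}$ exhibits $w^{-1}$ as a witness for $y\sim_{\phi} x$; and transitivity follows because if $x=(u\phi)y u^{-1}$ and $y=(v\phi)z v^{-1}$ then $x=((uv)\phi)\,z\,(uv)^{-1}$, so $uv$ witnesses $x\sim_{\phi} z$.

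Next I would establish the base case: for every $x\in F_n$ one has $x\sim_{\phi} x\phi$, with the explicit witness $w=x^{-1}$. Indeed,
\[
(w\phi)(x\phi)w^{-1}=(x^{-1}\phi)(x\phi)\,x=(x\phi)^{-1}(x\phi)\,x=x,
\]
which is exactly the defining equation $x=(w\phi)(x\phi)w^{-1}$.

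Finally I would bootstrap to arbitrary $k\in\Z$. The case $k=0$ is reflexivity. For $k\ge 1$, apply the base case to each of $x,\,x\phi,\,\dots,\,x\phi^{k-1}$ to obtain the chain $x\sim_{\phi} x\phi\sim_{\phi} x\phi^2\sim_{\phi}\cdots\sim_{\phi} x\phi^k$, and conclude $x\sim_{\phi} x\phi^k$ by transitivity. For $k\le -1$, apply the positive case to the element $x\phi^{k}$ with exponent $-k>0$ to get $x\phi^{k}\sim_{\phi} (x\phi^{k})\phi^{-k}=x$, and then use symmetry to deduce $x\sim_{\phi} x\phi^{k}$.

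I do not expect any genuine obstacle here; the content is elementary. The only thing requiring care is bookkeeping with the right-action convention (so that $(w\phi)$, and not $\phi(w)$, appears in the correct positions) and the handling of negative exponents, which is why I route those through symmetry rather than writing a single closed-form witness. If one prefers an explicit witness for all $k\ge 1$ at once, the ordered product $w=\prod_{j=0}^{k-1}(x\phi^{j})^{-1}$ also works, but the inductive argument above is cleaner.
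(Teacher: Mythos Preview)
Your proof is correct and follows essentially the same approach as the paper: establish the base case and then appeal to the fact that $\sim_\phi$ is an equivalence relation. The only cosmetic difference is that the paper exhibits explicit witnesses for both $x\phi\sim_\phi x$ and $x\phi^{-1}\sim_\phi x$ separately, whereas you do only the $k=1$ step and route negative $k$ through symmetry; both are equally valid.
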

\begin{proof}
	Simply notice that $x\phi = (x \phi) x x^{-1}$ and $x \phi^{-1} =   ((x^{-1} \phi^{-1}) \phi ) x (x \phi^{-1})$ and that $\sim_{\phi}$ is an equivalence relation.   
\end{proof}

\begin{thm}[Theorem 1.5 of \cite{BMMV}]
	\label{twistedconj}
	Let $\phi \in \Aut(F_n)$. Then the twisted conjugacy problem for $\phi$ is solvable. That is, $\sim_{\phi}$ is a recursive subset of $F_n^2$. 
\end{thm}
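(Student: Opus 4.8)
The plan is to reduce the $\phi$-twisted conjugacy relation to an ordinary conjugacy question in the mapping torus, and then attack the latter with growth-control tools for free-group automorphisms. Form the free-by-cyclic group $G = F_n \rtimes_\phi \Z = \langle F_n, t \mid t^{-1} g t = g\phi \ (g\in F_n)\rangle$, so that $\Ad(t)$ restricts to $\phi$ on $F_n$. A direct computation gives, for $w \in F_n$, that $w^{-1}(yt)w = \bigl(w^{-1} y\,(w\phi^{-1})\bigr)t$, and after the substitution $u = (w\phi^{-1})^{-1}$ the $F_n$-part ranges exactly over the twisted conjugacy class $\{(u\phi)\,y\,u^{-1} : u\in F_n\}$ as $w$ ranges over $F_n$. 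Meanwhile conjugating $yt$ by $t$ replaces $y$ by $y\phi$, which by Lemma~\ref{powers} stays within the same twisted class. Hence $x \sim_\phi y$ if and only if $xt$ and $yt$ are conjugate in $G$, and it suffices to solve the conjugacy problem in $G$ restricted to the coset $F_n t$.

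Second, I would solve conjugacy in $G$. Since conjugacy preserves the image in $\Z = G/F_n$, one only ever compares elements of equal $t$-exponent, and our instance lives in the exponent-$1$ coset. The two algorithmic inputs I would use are: (i) Brinkmann's algorithm, which decides, given $u,v\in F_n$, whether $u\phi^k$ is conjugate to $v$ in $F_n$ for some $k\in\Z$ (the conjugacy-orbit problem), together with the classical solvability of ordinary conjugacy in $F_n$; and (ii) the computability and finite rank of the fixed subgroups $\operatorname{Fix}(\phi^k)$ (Bestvina--Handel, Gersten, Maslakova). The role of (ii) is that the set of conjugators solving the twisted equation, when nonempty, is a coset of a computable finite-rank stabilizer, so a single witness suffices and non-existence can be certified; the role of (i) is to decide the existence of such a witness by passing to the cyclically reduced form of $xt$ and $yt$ in $G$ and translating the twisted equation into finitely many instances of the orbit question.

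The main obstacle is that a solving conjugator $w$ admits no \emph{a priori} length bound in terms of $\|x\|$ and $\|y\|$: iterating $\phi$ can expand lengths and create large cancellation, so a naive enumeration of candidate $w$ need not terminate. The technical heart is therefore to control this growth and cancellation --- via a (relative) train-track representative of $\phi$ and the bounded cancellation lemma --- both to restrict the relevant portion of the search and to convert the unbounded existence question into Brinkmann's decidable orbit problem. Once this control is in place, combining it with the finiteness of the fixed subgroups reduces the decision to finitely many effectively checkable conditions, yielding the claimed algorithm; this is the route taken in \cite{BMMV}.
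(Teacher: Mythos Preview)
The paper does not prove this statement at all: it is quoted verbatim as Theorem~1.5 of \cite{BMMV} and used as a black box, so there is no in-paper argument to compare against. Your sketch is a faithful outline of the actual \cite{BMMV} strategy---reduce $x\sim_\phi y$ to conjugacy of $xt$ and $yt$ in the mapping torus $F_n\rtimes_\phi\Z$, then solve conjugacy there via Brinkmann's orbit-decidability result together with bounded cancellation and computability of fixed subgroups---and your first-paragraph reduction is correct, so your proposal is consistent with what the paper invokes.
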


We will also need the following: 

\begin{thm}
	\label{conjandcentraliser}
The conjugacy problem is solvable in $\Out(F_2)$ and $\Aut(F_2)$. Moreover for any $\Phi \in Out(F_2)$ of infinite order and for any $\phi\in\Phi$:
\begin{itemize}
    \item The centraliser of $\Phi$ in $\Out(F_2)$ is virtually cyclic;
    \item there is an algorithm that computes the coset representatives of $\langle \Phi\rangle$ in its centraliser;
    \item  the centraliser of $\phi$ in $\Aut(F_2)$ has a finite index subgroup  $$C_0=\{ \phi^k \Ad(g) \ : \ k \in \Z, \ g \in Fix(\phi)\} = \langle \phi \rangle \times \langle \Ad(g) \ : \ g \in Fix(\phi) \rangle;$$
    \item the group $D=\{ \chi \in \Aut(F_2) \ : \ \phi^\chi = \phi \Ad(h), h \in F_2\}$ has a finite index subgroup  $D_0 = \{ \phi^k \Ad(x) \ : \ k \in \Z, x \in F_2 \}$. 
\end{itemize}

\end{thm}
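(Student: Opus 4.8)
The plan is to reduce everything to the classical Nielsen isomorphism $\Out(F_2)\cong\mathrm{GL}(2,\Z)$, induced by the action on the abelianisation $H_1(F_2)\cong\Z^2$, whose kernel on $\Aut(F_2)$ is exactly $\Inn(F_2)\cong F_2$. Since $\mathrm{GL}(2,\Z)$ is virtually free (hence hyperbolic), its conjugacy problem is solvable, and the centraliser of any infinite-order element is virtually cyclic and can be computed explicitly; concretely one splits into the hyperbolic case (where the centraliser is, up to sign, the unit group of an order in a real quadratic field) and the parabolic/reducible case (where it is virtually the cyclic group generated by a unipotent), and in each case a generating set together with coset representatives of $\langle\Phi\rangle$ is read off directly. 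I would record these two $\Out(F_2)$-assertions first, since the remaining bullets are deduced from them.

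Next I would transfer everything to $\Aut(F_2)$ through the short exact sequence $1\to\Inn(F_2)\to\Aut(F_2)\to\Out(F_2)\to1$. For the centraliser of $\phi$, restricting the projection to $C_{\Aut}(\phi)$ gives a map to $C_{\Out}(\Phi)$ whose kernel consists of the $\Ad(g)$ commuting with $\phi$; since $\phi^{-1}\Ad(g)\phi=\Ad(g\phi)$ and $\Ad$ is injective, this kernel is precisely $\{\Ad(g):g\in\operatorname{Fix}(\phi)\}$. The subgroup $C_0$ is exactly the preimage of $\langle\Phi\rangle$, and because $\Phi$ has infinite order, $\langle\Phi\rangle$ has finite index in the virtually cyclic $C_{\Out}(\Phi)$; this yields both the finite-index statement and the direct-product splitting (the two factors commute and $\langle\phi\rangle\cap\Inn(F_2)=\{1\}$). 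Identically, $D$ and $D_0$ are the full preimages in $\Aut(F_2)$ of $C_{\Out}(\Phi)$ and of $\langle\Phi\rangle$ respectively, so $[D:D_0]=[C_{\Out}(\Phi):\langle\Phi\rangle]<\infty$.

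Finally I would solve the conjugacy problem in $\Aut(F_2)$. Given $\phi,\psi$, first decide whether their images $\Phi,\Psi$ are conjugate in $\Out(F_2)$; if so, conjugate $\phi$ by a lift of the conjugator so that we may assume $\Psi=\Phi$, i.e.\ $\psi=\phi\,\Ad(h_0)$ for a computable $h_0$. Any $\chi$ with $\phi^\chi=\psi$ projects into $C_{\Out}(\Phi)$, hence lies in $D=\bigsqcup_{i=1}^m\theta_i D_0$, where the $\theta_i$ are lifts of the computed coset representatives. Writing $\phi^{\theta_i}=\phi\,\Ad(b_i)$ and $\chi=\theta_i\phi^k\Ad(x)$, a direct computation gives $\phi^\chi=\phi\,\Ad\!\big((x\phi)^{-1}(b_i\phi^k)x\big)$, so $\phi^\chi=\psi$ is equivalent to the twisted-conjugacy relation $h_0\sim_\phi b_i\phi^k$. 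The apparent difficulty is that $k$ ranges over all of $\Z$; this is dissolved by Lemma~\ref{powers}, which gives $b_i\phi^k\sim_\phi b_i$ for every $k$, so the test collapses to the finitely many questions ``is $h_0\sim_\phi b_i$?'', each decidable by Theorem~\ref{twistedconj}.

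The main obstacle I anticipate is twofold. First, one must make the centraliser and its coset representatives genuinely \emph{computable} in $\mathrm{GL}(2,\Z)$ uniformly across the hyperbolic and parabolic regimes rather than merely asserting virtual cyclicity. Second, and more subtly, one must control the a priori infinite family of powers $\phi^k$ appearing in the $\Aut(F_2)$ step; this is exactly what Lemma~\ref{powers} is designed to neutralise, and it is the hinge on which the reduction to finitely many twisted-conjugacy tests turns.
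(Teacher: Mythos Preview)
Your approach is essentially the same as the paper's: both exploit $\Out(F_2)\cong GL_2(\Z)$ being virtually free (hence hyperbolic) for the $\Out(F_2)$ bullets, and both obtain the $\Aut(F_2)$ bullets by chasing the short exact sequence $1\to\Inn(F_2)\to\Aut(F_2)\to\Out(F_2)\to1$. The paper is terser --- it says the $\Aut(F_2)$ statements are ``just translations'' of the $\Out(F_2)$ ones and cites \cite{BMV} for the computability of coset representatives --- whereas you actually write out the translation (identifying $C_0$ and $D_0$ as preimages of $\langle\Phi\rangle$ under the restricted projections); this is correct and arguably clearer.

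The one place you genuinely diverge is the conjugacy problem in $\Aut(F_2)$: the paper simply cites \cite{Bogopolski2000} and \cite{BMV}, while you give a self-contained reduction to finitely many twisted-conjugacy tests via Lemma~\ref{powers} and Theorem~\ref{twistedconj}. This is an attractive argument (and indeed it rehearses exactly the mechanism used later in the proof of Theorem~\ref{thm:rank_2_reducible}), but as written it only treats the case where the image $\Phi\in\Out(F_2)$ has infinite order, since that hypothesis is what makes $[D:D_0]=[C_{\Out}(\Phi):\langle\Phi\rangle]$ finite. When $\Phi$ is torsion (for instance $\Phi=1$, so $\phi$ is inner), $D_0$ has infinite index in $D$ and your reduction to finitely many $b_i$ collapses. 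You would need a separate argument for finite-order $\Phi$ --- e.g.\ exploiting that $GL_2(\Z)$ has only finitely many torsion conjugacy classes with explicitly computable centralisers, and then handling the residual orbit problem in $\Inn(F_2)\cong F_2$ directly --- or else defer to the cited references as the paper does. For the application to Theorem~\ref{thm:rank_2_reducible} this gap is harmless, since that theorem assumes the induced outer automorphism on $K$ has infinite order, but the stated theorem claims solvability of conjugacy in $\Aut(F_2)$ unconditionally.
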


\begin{proof}
    We note that $\Out(F_2) \cong GL_2(\Z)$ is virtually a free group of rank 2, and therefore word hyperbolic. Therefore the conjugacy problem is solvable and centralisers of infinite order elements are virtually cyclic. For any element of infinite order, $g$ in a hyperbolic group it is known to be algorithmic to find the coset representatives of $\langle g \rangle$ in its centraliser. This is folklore, but a detailed proof appears in Proposition 4.11 of \cite{BMV}. 

    The solution for the conjugacy problem for $\Aut(F_2)$ appears in \cite{Bogopolski2000} and also as Corollary 5.2 of \cite{BMV}. 

    The rest of the statements about $\Aut(F_2)$ are then just translations of the corresponding statements about $\Out(F_2)$. 
\end{proof}

We now address the conjugacy problem in $\Out(F_3,K)$ for elements that induce infinite order outer-automorphisms on $K$.

\begin{proof}[Proof of Theorem \ref{thm:rank_2_reducible}]
Let $G=\Out(F_3,K)$, and $\Phi, \Psi \in G$, given by data, as in Lemma~\ref{semidirect}, as \[\Phi=(\epsilon_1, \phi_0, u)\textrm{ ~and~ }\Psi=  (\epsilon_2, \psi_0, v).\] 

We assume that $\phi_0, \psi_0$ define infinite order outer-automorphisms of $K$. 
We must decide whether  there exists a conjugator, $\chi= (\epsilon_3, \chi_0, h) \in G$, such that $\Phi^{\chi} = \Psi$. By Lemma~\ref{finiteindex}, we may assume that $\epsilon_3 =1$. We calculate the possibilities for $\Phi^\chi$, listed as a triple:

\begin{equation}
\label{conjugate}	
\begin{array}{ll}
	\mbox{If } \epsilon_1=1 & \Phi^\chi = (1, \phi_0^{\chi_0}, (h^{-1} \phi_0^{\chi_0}) (u \chi_0) h) \\ \\
	\mbox{If } \epsilon_1=-1 & \Phi^\chi = (-1, \phi_0^{\chi_0} \Ad(h), h^{-1} (h^{-1} \phi_0^{\chi_0}) (u \chi_0) ) \\
\end{array}
\end{equation}

Hence if $\epsilon_1 =1$, then for $\Phi$ and $\Psi$ to be conjugate, we would require that $\phi_0$ and $\psi_0$ are conjugate. If 	$\epsilon_1 =-1$, they would need to be conjugate as outer automorphisms. But notice that in this case, when $\epsilon_1 =-1$, we can set $\chi_0=Id$, and by varying $h$, we can obtain triples where the second entry is anything of the form $\phi_0 \Ad(h)$. 

Hence, if $\Phi, \Psi$ are to be conjugate, we would require $\epsilon_1 = \epsilon_2$ (otherwise we know that they are not conjugate). Moreover, by solving conjugacy in $\Aut(F_2)$ or $\Out(F_2)$, we may assume that $\phi_0=\psi_0$ as automorphisms. To summarise, we have reduced the problem to the situation where: 

\begin{itemize}
	\item $\epsilon_1 = \epsilon_2$, 
	\item $\phi_0 = \psi_0$ in $\Aut(F_2)$. 
\end{itemize}

\noindent
\underline{Case 1: $\epsilon_1 = \epsilon_2 =1$. }: 

In this case we must have that $\phi_0^{\chi_0} = \psi_0 = \phi_0$. Hence $\chi_0$ centralises $\phi_0$. Let $C=C_{\Aut(F_2)}(\phi_0)$ be the centraliser of $\phi_0$ in $\Aut(F_2)$. Then $\chi$ lies in the subgroup, $C \ltimes F_2$ of $G$. By Lemma~\ref{finiteindex}, it will be enough to solve the problem where we look at conjugators that lie in the subgroup, $C_0 \ltimes F_2$, where $C_0$ is the finite index subgroup of $C$ given by Theorem~\ref{conjandcentraliser}. Hence we may assume that $\chi_0 = \phi_0^k \Ad(x)$, where $k \in \Z$ and $x \in Fix(\phi_0)$. 

But then we get that: 
$$
 \Phi^\chi = (1, \phi_0, (h^{-1} \phi_0) (u \phi_0^k \Ad(x)) h) = (1, \phi_0, ((h^{-1} x^{-1}) \phi_0) (u \phi_0^k) xh),
$$
using that fact that $x \phi_0 = x$. Since $\Psi = (1, \phi_0, v)$, that means we are trying to decide whether there exist $k \in \Z$, $h \in F_2$, $x \in Fix(\phi_0)$ such that: 

$$
(h^{-1} x^{-1}) \phi_0 (u \phi_0^k) xh = v.
$$
Putting $h'=xh$, this is equivalent to deciding: 
$$
({h'}^{-1}) \phi_0 (u \phi_0^k) h' = v. 
$$
But by Lemma~\ref{powers}, this is equivalent to saying that $u \sim_{\phi_0} v$, which is decidable by Theorem~\ref{twistedconj}.

\noindent
\underline{Case 2: $\epsilon_1 = \epsilon_2 =-1$. }: 

As before, we have that $\phi_0 = \psi_0$. This time, by Equation~\ref{conjugate}, we get that $\chi_0$ belongs to the subgroup $D$ given by Theorem~\ref{conjandcentraliser}. Hence, by Lemma~\ref{finiteindex}, we may assume that $\chi_0 \in D_0$. Hence, $\chi_0 = \phi_0^k \Ad(x)$ for some $x \in F_2$. Therefore, using Equation~\ref{conjugate} and simplifying: 
$$
 \Phi^\chi = (-1, \phi_0 \Ad(w), h^{-1} (h^{-1} \phi_0 \Ad(w h^{-1})) (u \chi_0) ),  
$$
where $w = (x^{-1} \phi_0)xh$. However, since we have reduced to the case where $\phi_0= \psi_0$, we get the extra restriction that $w=1$ that is equivalent to $h=x^{-1} (x \phi_0)$. And hence for $\Phi$ and $\Psi$ to be conjugate we would need to simply equate the third entry of the triple: 
$$
(h^{-1} \phi_0) h^{-1} x^{-1} (u \phi_0^k) x = v.
$$
Putting  $h=x^{-1} (x \phi_0)$ this gives: 
$$
(x^{-1} \phi_0^2) (x \phi_0) (x^{-1}  \phi_0) (u \phi_0^k) x = (x^{-1} \phi_0^2) (u \phi_0^k) x = v.
$$
Hence this problem is equivalent, by Lemma~\ref{powers} to $u \sim_{\phi_0^2} v$ or $u \phi_0 \sim_{\phi_0^2} v$, both of which are solvable by Theorem~\ref{twistedconj}. 
\end{proof}

\begin{rem}\label{rem:no_induction}
    A natural question to ask is whether the strategy above can be generalized to higher rank free groups. We note that ingredients used in the proof above are
    \begin{itemize}
        \item \emph{The conjugacy problem in $\Aut(F_2)$.} The conjugacy problem in $\Aut(F_n)$ appears to be harder than the conjugacy problem in $\Out(F_n)$.
        \item \emph{Explicit computations of centralizers in $\Out(F_2)$.} In our case, we were able to exploit the fact that $\Out(F_2)$ is virtually free. It was shown in \cite{bestvina_laminations_1997} that centralizers of irreducible automorphisms are virtually cyclic. As for more general automorphisms, the current state of the art for polynomially growing automorphisms \cite{rodenhausen_centralisers_2013,rodenhausen_centralisers_2015,andrew_centralisers_2022} amounts to establishing finiteness properties.
    \end{itemize}
    For these reasons, the argument above does not obviously generalize.
\end{rem}

\section{Polynomially growing automorphisms}\label{sec:quadratic}

Before turning our attention to the quadratic growth case in $F_3$ (i.e. the largest polynomial growth permitted there), let us record the combination of  classical results, on the conjugacy problem for low degree polynomial growth outer-automorphisms of $F_n$.

\begin{thm}[\cite{krstic_actions_1989,KrsticLustigVogtmann}]\label{thm:CP_subquadratic}
    The conjugacy problem in $\Out(F_n)$ is decidable among outer automorphisms with growth of polynomial degree 0 or 1.
\end{thm}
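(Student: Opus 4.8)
The plan is to separate according to the polynomial degree, exploiting the fact that, by Proposition~\ref{prop:alggrowth}, the degree of polynomial growth is computable and, being defined only up to conjugacy and choice of basis, is a conjugacy invariant. So, given the pair $\Phi,\Psi$, I would first compute their degrees; if they differ the answer is immediately \textbf{no}, and otherwise I would treat the common degree $d\in\{0,1\}$ in its own case. The two cases rest on two different classical sources, which is why the statement is phrased as a combination of results.

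\emph{Degree $0$.} The first observation I would make is that an outer automorphism of polynomial degree $0$ is precisely a finite-order element of $\Out(F_n)$: after passing to a rotationless (hence UPG) power, degree-$0$ growth forces that power to be trivial — this is exactly the computation at the end of the proof of Proposition~\ref{prop:alggrowth} — so $\Phi$ has finite order, and that order can be computed (finite-order elements have bounded order, and one can test when a power becomes inner). The conjugacy problem then reduces to the conjugacy problem for finite-order elements, which I would in turn reduce to a finite search: each finite-order outer automorphism is realized as a simplicial isometry of a marked metric graph of rank $n$, and since there are only finitely many finite graphs of rank $n$ of bounded complexity, each carrying a finite isometry group, all such realizations can be enumerated. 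Deciding conjugacy then amounts to deciding whether two graph isometries are related by a marking-respecting homeomorphism. This equivariant comparison is precisely what the algorithm of \cite{krstic_actions_1989} supplies.

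\emph{Degree $1$.} These are the linearly growing outer automorphisms, which — after passing to a rotationless power, which is then a Dehn twist automorphism in the sense of Cohen--Lustig — are exactly the roots of Dehn twist automorphisms. The equivariant Whitehead algorithm of \cite{KrsticLustigVogtmann} decides conjugacy directly among such roots, and hence settles this case. The one bookkeeping point I would verify is that their procedure decides conjugacy of the genuine outer automorphisms, and not merely of some common power, since conjugacy of $\Phi^k$ and $\Psi^k$ does not by itself imply conjugacy of $\Phi$ and $\Psi$.

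The main obstacle is entirely the degree-$1$ case: its substance is the equivariant Whitehead machinery of \cite{KrsticLustigVogtmann}, together with the normal-form theory (relative train tracks, or CT maps) needed to present a linearly growing automorphism as a root of a Dehn twist and to control the passage to the rotationless power. The degree-$0$ case, by contrast, is conceptually a finite enumeration once realization by graph isometries is invoked. Combining the two decision procedures, together with the initial comparison of degrees, then yields a single algorithm covering all of polynomial degree $0$ or $1$.
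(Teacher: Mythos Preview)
The paper does not give its own proof of this statement: it is recorded as a combination of classical results, with \cite{krstic_actions_1989} and \cite{KrsticLustigVogtmann} cited in the theorem header, and the text merely says ``let us record the combination of classical results''. Your proposal is a correct elaboration of precisely that combination --- Krsti\'c for the finite-order (degree~$0$) case via realization on graphs, and Krsti\'c--Lustig--Vogtmann for the linear (degree~$1$) case via the equivariant Whitehead algorithm for roots of Dehn twists --- together with the preliminary computation of the degree from Proposition~\ref{prop:alggrowth}. Your identification of degree~$0$ with finite order in $\Out(F_n)$ is correct and is exactly the content of the last paragraph of the proof of Proposition~\ref{prop:alggrowth}, and your bookkeeping remark that \cite{KrsticLustigVogtmann} treats the automorphisms themselves (as roots of Dehn twists) rather than only their Dehn-twist powers is well placed and accurate. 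So your proposal is correct and aligns with the paper's intent; there is simply no proof in the paper to compare against beyond the bare citations.
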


Recall also that the conjugacy problem among unipotent polynomially growing outer automorphisms  of $F_n$ has been solved by \cite{Feighn_Handel_Conjugacy}.

\subsection*{Quadratic polynomial growth in $F_3$.}

Every $\Phi \in  \Out(F_n)$ of polynomial growth has a power $\Phi^n$ that is unipotent. In fact, one can take this power to be uniform for every $n$; the exponent (or order) of $GL_n(\Z_3)$ suffices, as one can define a UPG automorphism as one of polynomial growth which induces the trivial map in $\Z_3$ homology. 

\begin{defn}
	Two automorphisms $\phi, \psi \in \Aut(F_n)$ are said to be\textit{ isogredient }if they are conjugate by an inner automorphism. 
\end{defn} 
    Recall that any outer-automorphism of $F_n$ is a  coset of $\Inn(F_n)$ in $\Aut(F_n)$.  On each such outer automorphism, the relation of isogredience is an equivalence relation.

\begin{thm}[Bestvina-Handel Theorem, \cite{Bestvina1992}]
	\label{Bestvina-Handel}
	Let $\Phi \in \Out(F_n)$. Then,
	\[
	\sum \max\{ \rank(\fix) - 1, 0 \} \leq n-1,
	\]
	where the sum is taken over representatives, $\phi$, of isogredience classes in $\Phi$. 
\end{thm}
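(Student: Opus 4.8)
The plan is to pass from algebra to the topology of a train track representative together with its lifts to the universal cover, and then to extract the inequality from a single Euler characteristic count. First I would realize $\Phi$ by a train track (or relative train track) map $f\colon G\to G$ on a marked graph with $\pi_1(G)\cong F_n$ and $-\chi(G)=n-1$, and lift everything to the universal cover $\tilde G$, a tree on which $F_n$ acts by deck transformations with quotient $G$. Each $\phi\in\Phi$ is then encoded by a lift $\tilde f_\phi\colon\tilde G\to\tilde G$ that is twisted equivariant, $\tilde f_\phi(w\cdot\tilde x)=(w\phi)\cdot\tilde f_\phi(\tilde x)$ for $w\in F_n$; two representatives are isogredient precisely when their lifts are conjugate by a deck transformation. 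Under this dictionary the fixed subgroup $\operatorname{Fix}(\phi)=\{w: w\phi=w\}$ is identified with the stabilizer of $\tilde f_\phi$ under the deck action, and it acts on the fixed set $\operatorname{Fix}(\tilde f_\phi)\subseteq\tilde G\cup\partial F_n$.

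Next I would analyze the fixed set of a single lift. When $r:=\rank(\operatorname{Fix}(\phi))\ge 2$, the subgroup $\operatorname{Fix}(\phi)$ is free of rank $r$, its limit set $\partial\operatorname{Fix}(\phi)\subset\partial F_n$ consists of fixed points of $\partial\phi$, and it acts freely and cocompactly on the convex hull of $\partial\operatorname{Fix}(\phi)$ in $\tilde G$; the quotient is a core graph $C_\phi\subset G$ with $-\chi(C_\phi)=r-1$. The crux is to compare these cores across isogredience classes: the cores attached to non-isogredient representatives should be carried by essentially disjoint subgraphs, since an element of $F_n$ carrying the fixed data of one lift into that of another would conjugate one lift to the other and hence witness isogredience. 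Granting this, summing Euler characteristics inside the single budget $-\chi(G)=n-1$ yields
\[
\sum_{[\phi]}\max\{\rank(\operatorname{Fix}(\phi))-1,\,0\}=\sum_{[\phi]}\bigl(-\chi(C_\phi)\bigr)\le -\chi(G)=n-1,
\]
with classes of fixed rank $0$ or $1$ contributing $0$, matching the truncation by $\max\{\cdot,0\}$.

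The hard part is exactly the disjointness bookkeeping: distinct lifts can share attracting fixed points of $\partial\phi$ that are not endpoints of either core, and the naive cores may meet, so a crude count risks overcounting. The clean way to organize this is the index theory of Gaboriau--Jaeger--Levitt--Lustig: one assigns to each $\phi$ the index
\[
\operatorname{ind}(\phi)=\max\{\rank(\operatorname{Fix}(\phi))-1,0\}+\tfrac12\,\#\{\text{attracting fixed points of }\partial\phi\text{ not in }\partial\operatorname{Fix}(\phi)\},
\]
establishes the global inequality $\sum_{[\phi]}\operatorname{ind}(\phi)\le n-1$ by running the Euler characteristic argument on the train track structure (where the stray boundary fixed points are accounted for by the non-fixed directions at fixed vertices), and then discards the non-negative correction term. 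Since $\operatorname{ind}(\phi)\ge\max\{\rank(\operatorname{Fix}(\phi))-1,0\}$ term by term, the stated inequality follows; the single--automorphism consequence $\rank(\operatorname{Fix}(\phi))\le n$, the Scott conjecture of \cite{Bestvina1992}, is simply the case of one isogredience class.
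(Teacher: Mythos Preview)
The paper does not give its own proof of this statement: it is quoted as a known result, with the citation \cite{Bestvina1992} (the single-automorphism case, i.e.\ the Scott conjecture) and, implicitly, \cite{GJLL} for the summed-over-isogredience-classes version you actually state. So there is no in-paper argument to compare against.

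That said, your sketch is an accurate outline of the Gaboriau--Jaeger--Levitt--Lustig proof and you correctly identify where the real work lies. The naive Euler-characteristic count via disjoint core subgraphs $C_\phi\subset G$ does not literally go through (as you note, the cores can overlap), and what GJLL actually do is closer to your second paragraph: they work at the level of fixed and attracting directions in the train track structure, define the index $\operatorname{ind}(\phi)$ as you wrote it, and prove $\sum_{[\phi]}\operatorname{ind}(\phi)\le n-1$ directly. Dropping the non-negative boundary term then yields the inequality in the statement. One small correction: the original \cite{Bestvina1992} paper establishes $\rank(\operatorname{Fix}(\phi))\le n$ for a single $\phi$ via the train track machinery, but the packaging as a sum over isogredience classes and the index formulation are due to \cite{GJLL}; the paper's attribution to \cite{Bestvina1992} alone is a slight shorthand.
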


\begin{thm}
	\label{quadraticBH}
		Let $\Phi \in \Out(F_n)$ have quadratic growth. Then, 
			\[
		1 \leq \sum \max\{ \rank(\fix) - 1, 0 \} \leq n-2,
		\]
		If $n=3$, this sum is exactly equal to 1 and exactly one isogredience class has a non-zero contribution to this sum. 
\end{thm}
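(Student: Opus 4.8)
The plan is to bracket the quantity $S := \sum_{[\phi]} \max\{\rank(\operatorname{Fix}\phi) - 1,\, 0\}$ between $1$ and $n-2$, where the sum runs over isogredience classes of representatives $\phi$ of $\Phi$, and then to read off the $n=3$ statement as the degenerate case of this sandwich. The upper bound will come from sharpening the Bestvina--Handel inequality (Theorem~\ref{Bestvina-Handel}) by exactly one, using that a positively growing automorphism carries data at infinity that no fixed subgroup can see; the lower bound will come from exhibiting, from the train-track description of quadratic growth, a single representative whose fixed subgroup has rank at least $2$, exactly as in Example~\ref{ex;quadratic}.

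For the upper bound, I would pass from the coarse count to the finer index of Gaboriau--Jaeger--Levitt--Lustig, in which each isogredience class $[\phi]$ is weighted by $\max\{0,\ \rank(\operatorname{Fix}\phi) - 1 + \tfrac12 a(\phi)\}$, where $a(\phi)$ counts the attracting fixed points of $\hat\phi$ on $\partial F_n$ that are not endpoints of lines in $\operatorname{Fix}\phi$, and whose total is again bounded by $n-1$. The key observation is that a direction of quadratic growth limits onto an attracting fixed point at infinity which, being a genuinely growing ray, cannot be an endpoint of a line in the (non-growing) fixed subgroup; hence at least one class carries a strictly positive $\tfrac12 a(\phi)$. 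Accounting for this extra half gives $S + \tfrac12 \le n-1$, and since $S$ is an integer this forces $S \le n-2$.

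For the lower bound, I would represent a unipotent polynomially growing power of $\Phi$ by a $\mathrm{CT}$ map $f\colon G\to G$ as in the proof of Proposition~\ref{prop:alggrowth}, so that every stratum is a single non-exponentially-growing edge $E$ with $f(E)=E\cdot u$. Quadratic growth forces two nested positively growing strata: a linear edge $E_1$ with $f(E_1)=E_1\cdot c$, where $c$ is a closed Nielsen path representing a nontrivial fixed conjugacy class $[z]$, sitting underneath a quadratic edge. The linear stratum then produces two independent fixed elements --- the class $z$ lying in the fixed subgraph, together with a commutator-type element built from the loop through $E_1$ and $z$ (the analogues of $a$ and $[a,b]$ in Example~\ref{ex;quadratic}) --- and these generate a fixed subgroup of rank at least $2$. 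As such elements are fixed on the nose rather than merely periodically, the corresponding representative of $\Phi$ itself, and not only of the power, has $\rank(\operatorname{Fix})\ge 2$, so $S\ge 1$.

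Finally, for $n=3$ the two bounds read $1\le S\le 1$, so $S=1$; since every nonzero summand $\max\{\rank(\operatorname{Fix}\phi)-1,0\}$ is an integer at least $1$, exactly one isogredience class can contribute, and it contributes exactly $1$, so its fixed subgroup has rank $2$. The step I expect to be the main obstacle is the upper bound: one must verify rigorously that the attracting fixed point supplied by the growth is not absorbed into $\partial\operatorname{Fix}\phi$, and that it is carried by a class whose index is already positive, so that the half-integer genuinely survives into the total; a secondary technical point is the bookkeeping relating the fixed subgroups of $\Phi$ itself to those of the rotationless power used to access the train-track structure.
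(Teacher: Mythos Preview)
Your approach via the Gaboriau--Jaeger--Levitt--Lustig index is exactly the mechanism underlying the references the paper cites: the paper's own proof simply invokes \cite{Levitt2009}, \cite{martino_maximal_2002}, and \cite{Andrew2022} for the statement that equality $S=n-1$ in Theorem~\ref{Bestvina-Handel} forces at most linear growth, and deduces $S\le n-2$ by contraposition. So your strategy is not different in kind, only more explicit. In fact the gap you flag in the upper bound closes via your own lower-bound construction: the CT representative you build carries \emph{simultaneously} the rank-$2$ fixed subgroup (from the linear edge $E_1$) and the attracting boundary ray (from the quadratic edge sitting above it, whose forward orbit is a ray beginning with that edge and hence not in $\partial\operatorname{Fix}$). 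Thus the extra $\tfrac12$ lands in a class whose contribution to $S$ is already at least $1$, and $S+\tfrac12\le n-1$ follows.

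The genuine gap is the passage from the rotationless power $\Phi^k$ back to $\Phi$. Your justification ``such elements are fixed on the nose rather than merely periodically'' is a non sequitur: those elements are fixed by the CT map $f$, but $f$ represents $\Phi^k$, not $\Phi$, so you have only exhibited a representative of $\Phi^k$ with rank-$2$ fixed subgroup. Fixed subgroups can strictly grow under powers, and isogredience classes of $\Phi$ and of $\Phi^k$ are not in simple bijection, so neither inequality for $\Phi$ follows formally from the corresponding one for $\Phi^k$. The cited references handle this directly; alternatively, observe that the only place the paper actually applies this theorem is Theorem~\ref{UPGinvariance}, where $\Phi$ is already UPG and no passage to a power is needed.
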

\begin{proof}
	This follows from results in either \cite{Levitt2009} or \cite{martino_maximal_2002}.
 A full discussion of this is also given in \cite{Andrew2022}. 

 More specifically, as argued in \cite[Theorems 2.4.6 and 2.4.8]{Andrew2022}, any outer automorphism, $\Phi$ of $F_n$, which satisfies the equality from \ref{Bestvina-Handel}, namely: 

 \[
	\sum \max\{ \rank(\fix) - 1, 0 \} = n-1,
	\]
 must have linear growth. 
\end{proof}

\begin{lem}
	\label{rank2upg}
	Let $\Phi \in \Out(F_2)$ be UPG. Then some automorphism in $\Phi$ has fixed subgroup of rank 2. 
\end{lem}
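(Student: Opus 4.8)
The plan is to use the isomorphism $\Out(F_2)\cong \mathrm{GL}_2(\Z)$ together with a normal-form reduction. Since $\Phi$ is UPG, the matrix it induces on the abelianisation $\Z^2$ is unipotent, hence conjugate in $\mathrm{GL}_2(\Z)$ to $\left(\begin{smallmatrix}1 & k\\ 0 & 1\end{smallmatrix}\right)$ for some $k\in\Z$ (if $\Phi\neq\mathrm{Id}$, the primitive fixed vector of $U-I$ can be extended to an integral basis). Because the natural map $\Out(F_2)\to\mathrm{GL}_2(\Z)$ is an isomorphism carrying conjugacy to conjugacy, $\Phi$ is conjugate in $\Out(F_2)$ to the class $\Sigma_k$ represented by the Nielsen automorphism $\phi_k\colon a\mapsto a,\ b\mapsto a^k b$ (with $\phi_0=\mathrm{id}$).

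The key reduction is that the property ``some representative has fixed subgroup of rank $2$'' is invariant under conjugacy in $\Out(F_2)$. Indeed, for $\vartheta\in\Aut(F_2)$ one checks directly, using the right-action convention, that $\mathrm{Fix}(\vartheta^{-1}\phi\vartheta)=\mathrm{Fix}(\phi)\,\vartheta$, which has the same rank as $\mathrm{Fix}(\phi)$ since $\vartheta$ is an automorphism. Hence it suffices to exhibit a single representative of $\Sigma_k$ with rank-$2$ fixed subgroup; transporting back along the conjugating automorphism then yields a representative of $\Phi$ with the same rank of fixed subgroup.

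I would then exhibit the fixed subgroup explicitly for the normal form. A direct computation gives $a\phi_k=a$ and $(b^{-1}ab)\phi_k=(a^kb)^{-1}a(a^kb)=b^{-1}ab$, so both $a$ and $b^{-1}ab$ lie in $\mathrm{Fix}(\phi_k)$. These two elements generate a free subgroup of rank $2$: they cannot be powers of a common element, for otherwise $b$ would normalise $\langle a\rangle$, which is impossible in a free group since $\langle a\rangle$ is self-normalising. (When $k=0$ the automorphism is the identity and its fixed subgroup is all of $F_2$, of rank $2$.) Finally, Theorem~\ref{Bestvina-Handel} with $n=2$ forces $\rank \mathrm{Fix} \leq 2$, so the fixed subgroup of $\phi_k$ is exactly of rank $2$.

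The argument is short, and the only points requiring care are the conjugacy-invariance of the statement and the verification that $\langle a, b^{-1}ab\rangle$ is genuinely of rank $2$; neither is a serious obstacle. The only mild subtlety worth flagging is the bookkeeping of the right-action conventions when transporting fixed subgroups along conjugating automorphisms.
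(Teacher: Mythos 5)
Your proof is correct and follows essentially the same route as the paper's: the paper's one-line argument asserts a basis in which a representative of $\Phi$ is the Nielsen map $a \mapsto a$, $b \mapsto ba^k$ with fixed subgroup $\langle a, bab^{-1}\rangle$, and your $\mathrm{GL}_2(\mathbb{Z})$ normal-form reduction is precisely how that assertion is justified. The extra details you supply (conjugacy-invariance of the fixed-subgroup rank and the Bestvina--Handel bound pinning the rank at exactly $2$) are correct fillings-in of what the paper leaves implicit.
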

\begin{proof}
	It is easy to show that there is a basis of $F_2$, $\langle a,b \rangle$ where (some automorphism of) $\Phi$ acts as: $a \mapsto a, b \mapsto b a^k$ for some integer $k$. Hence the fixed subgroup is  $\langle a, bab^{-1} \rangle$ or $\langle a, b \rangle$ when $k=0$. 
\end{proof}

\begin{thm}
	\label{UPGinvariance}
	Let $\Phi \in \Out(F_3)$ be a UPG automorphism of quadratic growth. Then $\Phi$ admits a rank 2 invariant free factor, $K$ which is algorithmically computable. Moreover, $K$ is unique in the sense that if $K_1$ is any other invariant rank 2 free factor, then $K_1$ is conjugate to $K$. 
\end{thm}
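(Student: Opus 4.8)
The plan is to prove existence, invariance and computability of $K$ from a train-track representative, and then to prove uniqueness up to conjugacy by bookkeeping of fixed subgroups through Theorem \ref{quadraticBH}.

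For existence I would first use that $\Phi$, being UPG, is rotationless, and invoke \cite{Feighn_Handel_2018} to algorithmically build a CT map $f\colon G\to G$ representing it. As in the proof of Proposition \ref{prop:alggrowth}, polynomial growth makes every stratum a single NEG edge $E_i$ with $f(E_i)=E_i\cdot u_i$, and the growth degrees are non-decreasing up the filtration; since the growth is quadratic, the top stratum is a single edge $E$ of degree $2$ (the maximal degree) and the filtration element $G'$ lying just below it is $f$-invariant of strictly smaller degree. Deleting the single edge $E$ from $G$ either leaves $G$ connected, in which case the core of $G'$ has rank $2$, or separates it into two pieces; since CT graphs have no valence-one vertices, in the separating case the two ranks are $2$ and $1$, and $f$ preserves each piece because it fixes the vertices. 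Either way I extract an $f$-invariant subgraph whose fundamental group is a rank-$2$ free factor $K$ (a second degree-$2$ edge cannot survive in it, as a rank-$2$ UPG automorphism grows at most linearly). Being read off the filtration of an algorithmically produced CT map, $K$ is computable, and being (a component of) a filtration element it is $\Phi$-invariant.

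For uniqueness I would fix, using Theorem \ref{quadraticBH} with $n=3$, the unique isogredience class contributing to the Bestvina--Handel sum together with a representative $\phi\in\Phi$ in it, so that $S:=\fix$ has rank exactly $2$ while every other isogredience class has fixed subgroup of rank at most $1$. Now let $K_1$ be an arbitrary $\Phi$-invariant rank-$2$ free factor and pick $\psi\in\Phi$ preserving $K_1$. The restriction $\Phi|_{K_1}\in\Out(K_1)\cong\Out(F_2)$ is again UPG, since the induced map on the direct summand $H_1(K_1)$ of $H_1(F_3)$ is a restriction of the unipotent abelianisation of $\Phi$; hence by Lemma \ref{rank2upg} I may replace $\psi$ by $\psi\,\Ad(k)$ with $k\in K_1$ so that the resulting $\psi'\in\Phi$ still preserves $K_1$ and has $\operatorname{Fix}(\psi'|_{K_1})$ of rank $2$. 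Then $\operatorname{Fix}(\psi')\supseteq\operatorname{Fix}(\psi'|_{K_1})$ has rank at least $2$, while Theorem \ref{quadraticBH} bounds it above by $2$; so $\operatorname{Fix}(\psi')=\operatorname{Fix}(\psi'|_{K_1})\le K_1$ has rank $2$. Consequently $\psi'$ lies in the unique contributing isogredience class, is therefore isogredient to $\phi$, and so $S$ is conjugate to $\operatorname{Fix}(\psi')$. In particular a conjugate of $S$ is contained in $K_1$.

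Applying the last paragraph to $K_1=K$ as well, both $K$ and any invariant rank-$2$ free factor $K_1$ contain a conjugate of the fixed rank-$2$ subgroup $S$. The smallest free factor containing a conjugate of $S$ — its free factor support, computable by \cite[Lemma 4.2]{Feighn_Handel_2018} as recalled around Proposition \ref{prop;tt_for_lamination_ffs} — then has rank at least $2$ (it contains the rank-$2$ group $S$) and is smaller than both $[K]$ and $[K_1]$ in the sense defined above; since these have rank $2$, it coincides with each, giving $[K]=[K_1]$. Thus every invariant rank-$2$ free factor is conjugate to $K$, as required. The hard part is this uniqueness half: the real content is to track the fixed subgroup across representatives $\psi'$ tailored to different free factors and to pin it, via the uniqueness of the contributing isogredience class in Theorem \ref{quadraticBH}, to the single conjugacy class $[S]$ whose free factor support is exactly $K$.
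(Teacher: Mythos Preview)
Your proof is correct and follows essentially the same strategy as the paper: existence via a CT representative, and uniqueness by combining Lemma~\ref{rank2upg} with the single-contributing-isogredience-class conclusion of Theorem~\ref{quadraticBH}. The only minor difference is that the paper works with the explicit upper-triangular basis $a\mapsto a$, $b\mapsto ba^k$, $c\mapsto ucv$ from \cite{Bestvina2005}, directly computes $\operatorname{Fix}(\phi)=\langle a,bab^{-1}\rangle$, and observes that $K=\langle a,b\rangle$ is already its free factor support---whereas you argue graph-theoretically about removing the top edge and then recover the link between $K$ and $S$ by applying your uniqueness paragraph to $K_1=K$.
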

\begin{proof}
	By \cite{Bestvina2005}, there is a basis, $a,b,c$ for $\Phi$ such that some $\phi \in \Phi$ has the following representation. In fact, by \cite{Feighn_Handel_2018} there is an algorithm to produce the following basis: 
	
	$$
	\begin{array}{rcl}
		& \phi & \\
		a & \mapsto &  a \\
		b & \mapsto & ba^k \\
		c & \mapsto & u c v,
	\end{array}
	$$ 
	where $k \in \Z$ and $u, v \in \langle a,b \rangle$ and $\phi \in \Phi$.

 Notice that $k=0$ implies that $\Phi$ has linear growth, and hence we deduce that $k \neq 0$. In particular, $\fix = \langle a, bab^{-1} \rangle$ (it cannot have higher rank due to Theorems~\ref{Bestvina-Handel} and \ref{quadraticBH}). Hence $\langle a,b \rangle$ is the smallest free factor containing $\fix$.

Thus we have algorithmically produced a $\Phi$-invariant rank 2 free factor, $\langle a,b \rangle$, and all that remains is to show is that it is unique. 
	
	If $K_1$ were another $\Phi$-invariant rank 2 free factor, then the restriction of $\Phi$ to $K_1$ would again be UPG, and Lemma~\ref{rank2upg} would imply that some $\psi \in \Phi$ would have a fixed subgroup of rank 2, contained in $K_1$. But Theorem~\ref{quadraticBH} then implies that $\phi$ and $\psi$ are isogredient and, hence have conjugate fixed subgroups implying that $\langle a,b \rangle$ and $K_1$ are also conjugate. 
\end{proof}

\begin{cor}\label{cor:quadratic_reducible}
	Let $\Phi \in \Out(F_3)$ be an automorphism of quadratic growth. Then $\Phi$ admits a unique invariant rank 2 free factor. Moreover, there is an algorithm to determine this free factor. 
\end{cor}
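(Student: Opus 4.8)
The plan is to deduce this from the UPG case already treated in Theorem~\ref{UPGinvariance} by passing to a suitable power. First I would invoke the observation recorded at the start of this subsection: there is a uniform, \emph{computable} exponent $N$ (the order of $GL_3(\Z/3)$ works) for which $\Phi^N$ is UPG. Since the degree of polynomial growth is unchanged under passing to a nontrivial power — $\|g(\phi^N)^m\|_X = \|g\phi^{Nm}\|_X$ grows like $m^2$ exactly when $\|g\phi^n\|_X$ grows like $n^2$ — the automorphism $\Phi^N$ still has quadratic growth. Hence Theorem~\ref{UPGinvariance} applies to $\Phi^N$ and yields an algorithmically computable rank $2$ free factor $K$ which is the unique (up to conjugacy) $\Phi^N$-invariant rank $2$ free factor.

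The key step is to promote $\Phi^N$-invariance of $K$ to $\Phi$-invariance. Fix $\phi\in\Phi$ and put $K'=K\phi$, again a rank $2$ free factor. I would check that $K'$ is $\Phi^N$-invariant: writing $K\phi^N = K^g$ for some $g\in F_3$ (the defining property of $\Phi^N$-invariance), one computes $K'\phi^N = K\phi^{N+1} = (K\phi^N)\phi = (K^g)\phi = (K')^{g\phi}$, so $K'$ is conjugate to its own image under $\phi^N$. The uniqueness clause of Theorem~\ref{UPGinvariance} then forces $K'$ to be conjugate to $K$; that is, $K\phi$ is conjugate to $K$, which is exactly $\Phi$-invariance. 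Uniqueness for $\Phi$ follows immediately from uniqueness for $\Phi^N$: any $\Phi$-invariant rank $2$ free factor $K_1$ has $K_1\phi$ conjugate to $K_1$, hence $K_1\phi^N$ conjugate to $K_1$, so $K_1$ is $\Phi^N$-invariant and therefore conjugate to $K$.

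The algorithm is then to compute $N$, form $\Phi^N$, and run the procedure supplied by Theorem~\ref{UPGinvariance}. I expect the only real content to be the invariance step of the middle paragraph, which is a standard argument that the canonical ($\Phi^N$-)invariant factor must be permuted trivially by $\Phi$, since uniqueness collapses its $\Phi$-orbit to a single conjugacy class; the one point requiring care is confirming that the computable power both preserves the quadratic growth hypothesis and is genuinely UPG, so that Theorem~\ref{UPGinvariance} legitimately applies.
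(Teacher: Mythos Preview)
Your proof is correct and follows essentially the same route as the paper: pass to a UPG power, apply Theorem~\ref{UPGinvariance} to obtain the unique invariant rank~2 free factor $K$, observe that $K\Phi$ is again invariant for the power and hence conjugate to $K$ by uniqueness, and deduce uniqueness for $\Phi$ from uniqueness for the power. You are simply more explicit than the paper about the computability of the exponent and the preservation of quadratic growth under powers.
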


\begin{proof}
	Some positive power, $\Phi^k$ of $\Phi$ is UPG. By Theorem~\ref{UPGinvariance}, $\Phi^k$ admits a unique invariant rank 2 free factor, $K$. But $K \Phi$ is also $\Phi^k$ invariant, and hence the uniqueness of $K$ implies that $K$ is  conjugate to $K \Phi$. 
	
	Finally, if $K_1$ is any $\Phi$-invariant rank 2 free factor, it must also be $\Phi^k$-invariant, and hence by Theorem~\ref{UPGinvariance} again, $K_1$ is conjugate to $K$. 

 This free factor is produced algorithmically as in Theorem~\ref{UPGinvariance}. 
\end{proof}

\subsection*{Conjugacy Problem in polynomial growth for  $\Out{F_3}$.}

We may conclude for this section. 

Recall (Proposition \ref{prop:alggrowth})  that given $\Phi$ and $\Psi$ two outer-automorphisms of $F_3$ we may decide whether they are of polynomial growth, and we may compute their degree.

Assume first that both are polynomial growth of degree $2$. We may compute by \ref{cor:quadratic_reducible} the unique conjugacy classes $[K_\Phi], [K_\Psi]$ of invariant free factor of rank 2 of $F_3$. After conjugating $\Psi$ by a (computable) automorphism that sends $K_\Phi$ to $K_\Psi$, we may assume that these two groups are equal. We may then apply Theorem \ref{thm:rank_2_reducible} in order to decide whether $\Phi$ and $\Psi$ are conjugate in $\Out{(F_3,K)}$.  

Since by Corollary \ref{cor:quadratic_reducible}, the rank 2 invariant free factor of $\Phi$ is unique, the two outer-automorphisms are conjugate in $\Out{(F_3,K)}$ if and only if they are conjugate in $\Out{(F_3)}$.  

This, together with Theorem \ref{thm:CP_subquadratic}, solves the conjugacy problem in $\Out{F_3}$ for all polynomially growing outer-automorphisms.

 \section{Exponential growth: laminations}\label{sec:expo_i}

Recall (references in Section \ref{sec;decide_irred}, Proposition \ref{prop:alggrowth}) that one can decide whether an outer-automorphism is of exponential growth, and whether it is irreducible. In that case, we recall the following. 
 
   \begin{thm}  
    [{\cite{Los, Lustig2007,Kapovich_algo, Francaviglia2022}}] 
    \label{thm:CP-iwip}

     The conjugacy problem is decidable among irreducible elements of $\Out(F_n)$.  
     
    \end{thm}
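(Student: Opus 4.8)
The plan is to run the argument through train track theory, with a preliminary dynamical invariant and then a case split on full irreducibility. By the Bestvina--Handel algorithm \cite{Bestvina1992}, every irreducible $\Phi \in \Out(F_n)$ admits a computable train track representative $f\colon \Gamma \to \Gamma$ whose transition matrix is Perron--Frobenius, and its leading eigenvalue $\lambda(\Phi)$ (the expansion factor) is a conjugacy invariant. So the first, cheap step is to compute train track representatives of $\Phi$ and $\Psi$ and compare $\lambda(\Phi)$ with $\lambda(\Psi)$; if they differ, we answer \textbf{no}. I would then decide, via \cite{Kapovich_algo} (and \cite{Francaviglia2022} for the weaker irreducibility test), whether each of $\Phi,\Psi$ is fully irreducible. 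Full irreducibility is itself a conjugacy invariant, so if exactly one of them is fully irreducible the answer is \textbf{no}; otherwise we fall into one of the two regimes below.

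In the first regime, where the automorphisms are irreducible but not fully irreducible, the Remark of Section~\ref{sec;decide_irred} tells us that each is \emph{geometric}: it is induced by a pseudo-Anosov homeomorphism of a punctured surface $S$ that cyclically permutes at least two punctures. Here I would reconstruct $S$ and the mapping class from the train track and lamination data (the invariant free factor system realized by the periodic powers pins down the peripheral structure), and then reduce conjugacy of $\Phi$ and $\Psi$ to conjugacy in the mapping class group of $S$, which is decidable. The care needed is to match conjugacy of outer automorphisms with conjugacy of mapping classes, accounting for the finite and computable ambiguity in passing between $\mathrm{MCG}(S)$ and $\Out(\pi_1 S)=\Out(F_n)$ (orientation, puncture-permutation data, and the change of marking).

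In the second regime, where both are fully irreducible, $\Phi$ acts with North--South dynamics on compactified Outer space $\overline{\CV_n}$, with attracting and repelling fixed trees $T_\pm$; equivalently one has the attracting and repelling laminations, which are read off from the train track map. This pair is a complete conjugacy invariant: $\Phi$ and $\Psi$ are conjugate if and only if some $\alpha \in \Out(F_n)$ carries $(T_+^\Phi, T_-^\Phi)$ to $(T_+^\Psi, T_-^\Psi)$. The substance of the argument, carried out in \cite{Los} and \cite{Lustig2007}, is to make the search for such an $\alpha$ terminate: one exploits that the train track combinatorics compatible with a fixed expansion factor $\lambda$ and rank $n$ are sufficiently constrained, together with the fact that the centralizer of a fully irreducible automorphism is virtually cyclic, to reduce the existence of $\alpha$ to a finite, checkable enumeration. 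For the atoroidal subcase one may alternatively pass to the mapping torus $F_n \rtimes_\Phi \Z$, which is then word-hyperbolic, and decide conjugacy up to inversion via an isomorphism of mapping tori respecting the $\Z$-quotient, using the solution of the isomorphism problem for hyperbolic groups; this is Sela's route \cite{Sela}, with the separate test of $\Phi \sim \Psi$ against $\Phi \sim \Psi^{-1}$ handling the inversion ambiguity.

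The main obstacle is precisely the termination in the fully irreducible regime: the dynamical invariant characterizes conjugacy cleanly, but converting ``there exists a conjugating automorphism'' into a bounded, decidable search requires effective control on the $\Out(F_n)$-orbit of the attracting lamination and on centralizers, which is the technical heart of \cite{Los, Lustig2007}. Once that control is in place, every classification step above is computable and every comparison is finite, so assembling the steps yields the stated decision procedure.
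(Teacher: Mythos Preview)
The paper does not prove this theorem; it is stated with attribution to \cite{Los, Lustig2007, Kapovich_algo, Francaviglia2022} and used as a black box in the algorithm of Figure~\ref{fig:THE_algo}. There is therefore nothing to compare your argument against directly.

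Your outline is a reasonable reconstruction of why the result holds, but it differs from Los's original route. Los does not split on full irreducibility: he works uniformly with any irreducible $\Phi$, builds from a train track representative a finite combinatorial object (essentially the train track together with its transition data and the expansion factor), and shows that two irreducible automorphisms are conjugate if and only if their associated objects agree after a bounded change of marking, yielding a terminating search. Your decomposition into \emph{fully irreducible} versus \emph{geometric} is legitimate and arguably more conceptual, and the fully irreducible branch you describe matches \cite{Lustig2007} and \cite{Sela} well.

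Two places in your sketch need tightening. First, you omit the finite-order case: an irreducible $\Phi$ with $\lambda(\Phi)=1$ is periodic, and this must be handled separately (e.g.\ via \cite{krstic_actions_1989}) before invoking the Remark that irreducible, non-iwip, exponentially growing automorphisms are geometric. Second, in the geometric branch, the reduction to conjugacy in $\mathrm{MCG}(S)$ requires justifying that any conjugator in $\Out(F_n)$ between two such pseudo-Anosovs must preserve the peripheral system and hence lie in the (extended) mapping class group; this follows because the peripheral conjugacy classes are exactly the maximal polynomially growing subgroups, a conjugacy invariant, but you should say so rather than leave it as ``accounting for finite ambiguity''. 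With those two points addressed, your argument would stand as an alternative proof.
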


We will now focus on the reducible case.

\subsection*{Invariant Free Factor Systems in $F_3$}
    We will now work towards describing reducible exponentially growing automorphisms. Following \cite{Bestvina2000} we have

\begin{prop}
	\label{oneexp}
	Let $\Phi \in \Out(F_3)$. Then any relative train track representative of $\Phi$ has at most one exponential stratum. 
\end{prop}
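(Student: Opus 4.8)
The plan is to attach a monotone counting invariant to the filtration of the relative train track map. Let $f\colon G\to G$ be a relative train track representative of $\Phi$, with filtration $\emptyset=G_0\subset G_1\subset\cdots\subset G_m=G$ and strata $H_i=\overline{G_i\setminus G_{i-1}}$. Each $G_i$ is $f$-invariant, so the fundamental groups of its noncontractible components form a $\Phi$-invariant free factor system $\mathcal F_i$, and the total first Betti number $b_1(G_i)$ (the sum of the Betti numbers of the components) equals the total rank of $\mathcal F_i$; thus $0\le b_1(G_0)\le\cdots\le b_1(G_m)=3$. The whole argument reduces to the following claim: \emph{each exponentially growing (EG) stratum raises the total Betti number by at least $2$}, i.e. if $H_r$ is EG then $b_1(G_r)\ge b_1(G_{r-1})+2$. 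Granting this, two EG strata at levels $i_1<i_2$ would force, by monotonicity, $b_1(G_{i_2})\ge b_1(G_{i_2-1})+2\ge b_1(G_{i_1})+2\ge (b_1(G_{i_1-1})+2)+2\ge 4>3$, a contradiction; hence there is at most one.

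To prove the claim I would first localize: since the transition matrix of $H_r$ is irreducible, all edges of $H_r$ lie in a single component $C$ of $G_r$ (any two are joined by a subpath of some $f^n(E_i)$, which lies in the invariant subgraph $G_r$). Write $X=C\cap G_{r-1}$ for the part of $C$ below $H_r$; it is an $f$-invariant subgraph whose components $D_1,\dots,D_p$ carry the free factors of $\mathcal F_{r-1}$ sitting inside $\pi_1(C)$. Now collapse $X$: form $\overline C=C/X$ and let $\overline f\colon \overline C\to\overline C$ be the induced map. A direct Euler characteristic count gives $b_1(\overline C)=b_1(G_r)-b_1(G_{r-1})$, so it suffices to show $b_1(\overline C)\ge 2$. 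Because $X$ is $f$-invariant, $\overline f$ is a well-defined homotopy equivalence representing the automorphism of $\pi_1(\overline C)\cong F_{b_1(\overline C)}$ induced by $f_*$ on the quotient of $\pi_1(C)$ by the normal closure of the free factor system $\{\pi_1(D_j)\}$. The relative train track conditions (no cancellation among the $H_r$-edges occurring in $f(E_i)$) guarantee that $\overline f$ is a genuine train track map whose transition matrix is again the irreducible Perron--Frobenius matrix of $H_r$, with eigenvalue $\lambda>1$. Hence the automorphism represented by $\overline f$ has exponential growth, which is impossible for $F_0$ or $F_1$; therefore $b_1(\overline C)\ge 2$, as needed.

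The step I expect to be the main obstacle is verifying that the collapse $\overline f$ is legitimately a train track map representing an \emph{automorphism} of $\pi_1(\overline C)$ — that is, that collapsing the noncontractible lower strata neither destroys $\pi_1$-injectivity nor introduces illegal backtracking into the images $\overline f(E_i)$. This is exactly where the relative train track axioms (invariance of $G_{r-1}$ together with the absence of cancellation among $H_r$-edges) are essential, and it is the reason the increment is $+2$ rather than the weaker $+1$ that Euler characteristic alone would permit. I should also record that a purely lamination-theoretic approach (each EG stratum carries an attracting lamination whose free factor support has rank $\ge 2$, and these supports are strictly nested along the filtration) only yields \emph{at most two} EG strata in rank $3$, since the nested supports could a priori be a rank-$2$ factor inside all of $F_3$; the Betti increment of $2$ per exponential stratum is precisely the sharpening that excludes this remaining case.
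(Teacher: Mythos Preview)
Your argument follows the same overall strategy as the paper's: each exponentially growing stratum forces the Betti number of the filtration to jump by at least $2$, so two such strata would need rank at least $4$. The difference is in how the jump is established. The paper is direct and local: after passing to a power fixing a component $C_r$ of $G_r$ and a component $C_{r-1}$ of $C_r\cap G_{r-1}$, it takes an edge $e$ of $H_r$ incident to $C_{r-1}$, finds $k$ with $f^k(e)$ crossing $e$ at least three times, and reads off $\operatorname{rank}\pi_1(C_r)\ge\operatorname{rank}\pi_1(C_{r-1})+2$ from that. You instead collapse $G_{r-1}$ and argue that the induced map $\overline f$ is a train track map representing an exponentially growing automorphism of $\pi_1(\overline C)$, whence $b_1(\overline C)\ge 2$. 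Your route is more conceptual and makes transparent \emph{why} the increment is $2$ (no exponential growth on $F_0$ or $F_1$); the paper's is quicker and sidesteps the question of whether the collapse is a homotopy equivalence.

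Two small points. First, your parenthetical justification that all edges of $H_r$ lie in a single component $C$ of $G_r$ is not quite right: the path $f^n(E_i)$ crosses $E_j$ but need not cross $E_i$ itself, and when $M_r$ is irreducible but periodic the components meeting $H_r$ can be cyclically permuted by $f$. This is harmless --- either pass to a power fixing the components (as the paper does) or sum your Euler-characteristic identity over the $f$-orbit of components. Second, the obstacle you flag --- that $\overline f$ represents an \emph{automorphism} of $\pi_1(\overline C)$ --- has a clean resolution you might as well state: $\{[\pi_1(D_j)]\}$ is a $\Phi$-invariant free factor system of the free factor $\pi_1(C)$, so the restricted outer automorphism descends to an outer automorphism of $\pi_1(C)/\langle\!\langle \pi_1(D_j)\rangle\!\rangle\cong\pi_1(\overline C)$; the train-track property of the representative $\overline f$ is then exactly the $r$-legality axiom for the EG stratum.
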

\begin{proof}
	Suppose that $f: \Gamma \to \Gamma$ is a relative train track representative of $\Phi$. Suppose that $H_r$ is an exponential stratum and that $G_r$ is the corresponding $f$-invariant subgraph (the union of all the strata, up to and including $H_r$). 
	
	Similarly, let $G_{r-1}$ be the $f$-invariant subgraph consisting of the union of the strata up to, but not including, $H_r$. 
	
	Let $C_r$ be a connected component of $H_r$ and let $C_{r-1}$ be a connected component of $C_r \cap G_{r-1}$ (which we allow to be empty). Since the transition matrix of $H_r$ is irreducible, some power of $f$ must leave both $C_r$ and $C_{r-1}$ invariant. However, since $H_r$ is an exponential stratum, the the rank of $\pi_1(C_r)$ is at least 2 more than the rank of $\pi_1(C_{r-1})$. (For instance, take an edge of $H_r$ incident to $C_{r-1}$ an take a power, $k$, of $f$ such that $f^k(e)$ crosses $e$ at least 3 times). This immediately gives the result: either the rank of $\pi_1(C_{r-1})$ is zero and the rank of $\pi_1(C_{r})$ is at least 2, in which case there can only be zero or polynomial strata above $H_r$, or the rank of $\pi_1(C_{r-1})$ is one, and there can be no strata above $H_r$. 
\end{proof}

\begin{cor}\label{cor:invariant_carrier}
	Let $\Phi \in \Out(F_3)$ have exponential growth. Then $\Phi$ has exactly one attracting lamination. This lamination is carried by a $\Phi$-invariant conjugacy class of a free factor, $K$ of $F_3$. 
	
	The rank of $K$ is either 2 or 3. If the rank of $K$ is 2, then it is unique in the following sense: if $K_1$ is another free factor of rank 2 which is $\Phi$-invariant (up to conjugacy), then $K_1$ is conjugate to $K$.
	
\end{cor}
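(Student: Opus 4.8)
The plan is to prove the four assertions (exactly one lamination; an invariant free factor carrier; rank $2$ or $3$; uniqueness in the rank $2$ case) in that order, reducing throughout to a rotationless power $\Phi'$ of $\Phi$. This reduction is harmless because both the set of attracting laminations and the property of being a $\Phi$-invariant free factor are stable under passing to positive powers.

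First I would establish that there is exactly one attracting lamination. Since $\Phi$ has exponential growth, any relative train track representative of $\Phi'$ has at least one exponential stratum, and by Proposition~\ref{oneexp} it has at most one; hence exactly one. As recalled in the proof of Proposition~\ref{prop;tt_for_lamination_ffs}, a rotationless automorphism produces exactly one attracting lamination per exponential stratum, so $\Phi'$ has a single attracting lamination $\Lambda$, and therefore so does $\Phi$ since $\mathcal{L}(\Phi)=\mathcal{L}(\Phi')$. For the carrier, by \cite{Bestvina2000} (Lemmas 3.1.6 and 3.1.13) the set $\mathcal{L}(\Phi)$ is carried by a $\Phi$-invariant free factor system, and the free factor support of a \emph{single} lamination is a single conjugacy class of free factor $K:=F(\Lambda)$. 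Because $\Lambda$ is the unique attracting lamination it is fixed by $\Phi$, so $K$ is $\Phi$-invariant. A free factor of $F_3$ has rank $1$, $2$, or $3$; rank $1$ is excluded because, by the very definition of an attracting lamination, $\Lambda$ is not carried by a $\Phi$-periodic free factor of rank one while $K$ is $\Phi$-invariant. Hence $\rank K\in\{2,3\}$.

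For uniqueness, assume $\rank K=2$ and let $K_1$ be any $\Phi$-invariant rank-$2$ free factor. Since $K_1$ is its own normaliser, $\Phi$ induces a well-defined outer automorphism of $K_1\cong F_2$, which has either polynomial or exponential growth. If it is exponential then, $\Out(F_2)\cong GL_2(\Z)$ being virtually free, the restriction is fully irreducible and carries a unique attracting lamination $\Lambda_1$; as $\Lambda_1$ is a $\Phi$-invariant, exponentially attracting lamination it must equal $\Lambda$, so $\Lambda$ is carried by $K_1$. Minimality of the free factor support then yields a conjugate of $K$ inside $K_1$, and a free factor of $F_3$ contained in the free factor $K_1$ is a free factor of $K_1$; a rank-$2$ free factor of a rank-$2$ free group is everything, so $K_1$ is conjugate to $K$.

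The remaining case — $\Phi|_{K_1}$ of \emph{polynomial} growth — is where the real work lies, and I expect it to be the main obstacle, since the statement forces this case to be impossible. The plan is to realise $K_1$ as a $\Phi'$-invariant core subgraph $\Gamma_1$ of rank $2$ inside a relative train track map, carrying no exponential stratum. The unique exponential stratum $H_r$ then sits strictly above $\Gamma_1$, and the rank count from the proof of Proposition~\ref{oneexp} forces the invariant subgraph through $H_r$ to have rank $3$; its fundamental group carries $\Lambda$, so $F(\Lambda)$ would have rank $3$, contradicting $\rank K=2$. Informally, once the lamination carrier already has rank $2$ the complementary polynomial dynamics occupies only rank $1$, leaving no room for a second, polynomially growing, rank-$2$ invariant free factor. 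Making this bookkeeping rigorous — in particular verifying that an exponential stratum placed above a rank-$2$ base genuinely raises the free factor support of $\Lambda$ to all of $F_3$, rather than to some proper rank-$2$ free factor — is the step that will require the most care.
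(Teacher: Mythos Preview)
Your treatment of the first three assertions is essentially the same as the paper's. The uniqueness argument, however, is more tangled than it needs to be and contains a genuine misstep in the polynomial case.

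In your polynomial case you write that ``the unique exponential stratum $H_r$ then sits strictly above $\Gamma_1$'' and proceed to analyse its carrier. But the rank count you invoke from Proposition~\ref{oneexp} actually shows that no such $H_r$ can exist: an exponential stratum raises the rank of the filtration by at least $2$, so an exponential stratum above a rank-$2$ invariant subgraph would force rank $\geq 4$ in $F_3$. The correct conclusion is therefore not that the carrier jumps to rank $3$, but that there is no exponential stratum anywhere in this relative train track, whence $\Phi$ has polynomial growth --- contradicting the hypothesis. This is exactly the paper's argument, stated in one line: if the restriction to $G_r$ is polynomial, the whole automorphism is polynomial.

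The paper also avoids your case split entirely. It builds (via \cite[Theorem~5.1.5]{Bestvina2000}) an improved relative train track for a power of $\Phi$ in which $K_1$ is realised as an invariant subgraph $G_r$, observes as above that the restriction cannot be polynomial, and then invokes \cite[Lemma~3.1.9]{Bestvina2000} directly: an exponential stratum inside $G_r$ produces an attracting lamination carried by $K_1$. Since there is only one lamination, $K_1$ carries $\Lambda$, and minimality of $K$ plus equality of ranks finishes. Your exponential case reaches the same endpoint but via an unnecessary detour through full irreducibility in $\Out(F_2)$ and an identification $\Lambda_1=\Lambda$ whose justification (``$\Out(F_2)$ is virtually free'') is not the right one; Lemma~3.1.9 is what actually does the work.
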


\begin{proof}
	Some positive power of $\Phi$ has an improved relative train track representative by~\cite[Theorem 5.1.5]{Bestvina2000}. All we will really need here is that this relative train track is eg-aperiodic; that is, the transition matrix of every exponential stratum is not just irreducible but also aperiodic (also known as primitive), meaning that it has a power where every entry is positive. In fact, it is clear that every relative train track map has a positive iterate which is eg-aperiodic, and we don't really need the full force of \cite[Theorem 5.1.5]{Bestvina2000}. 
	
	Let us call this relative track representative, $f: \Gamma \to \Gamma$ and denote the $r^{th}$ stratum by $H_r$. 
	
	Then, by \cite[Lemma 3.1.10]{Bestvina2000}, if $\beta$ is a generic line of some lamination for $\Phi$, then representing $\beta$ in $\Gamma$ yields a line, $\lambda$, where the highest stratum crossed by $\lambda$ is an exponential stratum. (Using the terminology from \cite{Bestvina2000}). Moreover, by~\cite[Corollary 3.1.11]{Bestvina2000}, since our relative train track is eg-aperiodic, any generic line of any lamination whose realisation in $\Gamma$ which crosses the same $H_r$ but no higher stratum will have the same closure as  $\beta$. However, by Proposition~\ref{oneexp}, $f$ only admits one exponential stratum. Since laminations are closures of such lines this means that there is exactly one lamination for $\Phi$, which must therefore be fixed by $\Phi$ (and not just periodic for $\Phi$). 
	
	As in \cite[Corollary 2.6.5 and Definition 3.2.3]{Bestvina2000}, there is a unique free factor whose conjugacy class carries the lamination of $\Phi$. Since the lamination is fixed by $\Phi$, this free factor is $\Phi$-invariant up to conjugacy.  
	
	For the final part, we note that an exponential stratum cannot have a component which has rank 1 as a graph (is homotopic to a circle), so $K$ must have rank 2 or 3.

	Furthermore, if $K$ has rank 2, and if $K_1$ is a $\Phi$-invariant free factor (up to conjugacy) also of rank 2, then we can form - via \cite[Theorem 5.1.5]{Bestvina2000} - an improved relative train track representative for some power of $\Phi$, where $K_1$ is the fundamental group of some invariant subgraph, $G_r$. 
 If the restriction of the relative train track map to $G_r$ is polynomial, then the whole automorphism will have polynomial growth; this is excluded by hypothesis. Hence, by~\cite[Lemma~3.1.9]{Bestvina2000}, there is a lamination carried by (the conjugacy class of) $K_1$. But there is only one lamination for $\Phi$, hence $K$ and $K_1$ are conjugate. 
		
\end{proof}

\subsection*{Conjugacy problem for exponential growth with rank $2$ lamination}

Consider $\Phi$ and $\Psi$ two outer-automorphisms of $F_3$ that are of exponential growth, and whose attracting lamination is carried by a free factor of rank $2$, respectively $K_\Phi$ and $K_\Psi$.  

 By Proposition \ref{prop;tt_for_lamination_ffs}  (and the unicity of Corollary \ref{cor:invariant_carrier}), the groups $K_\Phi$ and $K_\Psi$ can be computed. 

 Since both $K_\Phi$ and $K_\Psi$ are free factors of same rank, there exists (and one can compute) an automorphism $\chi$ of $F_3$ sending $K_\Psi$ to $K_\Phi$, and  after conjugating $\Psi$ by  the outer-class  $\Chi$ of $\chi$, we may assume that $K_\Phi = K_\Psi$, and we denote it $K$. 

Since by Corollary \ref{cor:invariant_carrier}, this invariant free factor is unique, the two automorphisms  $\Phi$ and $\Psi$ are conjugated in $\Out(F_3)$ if and only if they are conjugated in $\Out(F_3, K)$. This can be decided by Theorem \ref{thm:rank_2_reducible}.

\section{Exponential growth: mapping tori}\label{sec:expo_ii}

In this section, we 
 take the point of view of analysing the  semi-direct products of $F_3$ that are associated to automorphisms (see also \cite{Sela, Dahmani2016, Dahmani2017, DahmaniTouikan2021}). Although this point of view allows to treat the conjugacy problem for all the exponentially growing outer-automorphisms  of $F_3$, we restrict our presentation to the remaining case in Flowchart \ref{fig:THE_algo}, namely the case of automorphisms whose attracting lamination carrier is the entire group $F_3$.

Given $\phi \in \Aut(F_n)$, the associated mapping torus  is $F_n\semidirect_\phi \langle t \rangle$. The normal subgroup $F_n < F_n\semidirect_\phi \langle t \rangle$ is called a \emph{fiber}.  If $\phi = \psi\Ad(g)$ then we have  $F_n\semidirect_{\psi}\langle t \rangle= F_n\semidirect_{\phi}\langle tg \rangle$, in particular $\Phi=[\phi]\in\Out(F_n)$ has a well-defined mapping torus. The following proposition describes how it relates to conjugacy in $\Out(F_n)$. 
We refer the reader to  \cite{DahmaniTouikan2021} for precise definititions.
    
\begin{prop}[Standard mapping torus criterion for conjugacy]\label{prop;criterion_fo_isom}
       Let $G$ be a group,  $\Phi,\Psi \in \Out(G)$ and  $\phi \in \Phi, \psi \in \Psi$. Then $\Phi$ is conjugate to $\Psi$ in $\Out(G)$ if and only if there is an isomorphism\[
        f: G \semidirect_\phi \langle t \rangle \to G \semidirect_\psi \langle t \rangle
        \] such that $f(G) = G$ and $f(t) = tw$ for some $w \in G$.
\end{prop}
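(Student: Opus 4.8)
The plan is to reduce both implications to a single algebraic identity. Recall that in the mapping torus $G\semidirect_\phi\langle t\rangle$ the defining relation is $t^{-1}gt=g\phi$ for every $g\in G$ (so that $\Ad(t)$ restricts to $\phi$ on $G$), and every element is uniquely of the form $gt^n$. I would first observe that a map $f$ of the required shape is completely determined by its restriction $\chi:=f|_G\in\Aut(G)$ — which is an automorphism precisely because $f(G)=G$ and $f$ is an isomorphism — together with the translation element $w\in G$ with $f(t)=tw$. The entire content of the statement is then that such an $f$ is a homomorphism if and only if $\chi^{-1}\phi\chi=\psi\,\Ad(w)$, i.e. if and only if $\Phi^\Chi=\Psi$ in $\Out(G)$ with $\Chi=[\chi]$.

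Concretely, I would compute $f(t^{-1}gt)$ in two ways, keeping the right-action convention. On one hand $f(t^{-1}gt)=f(g\phi)=g\phi\chi$. On the other hand, using $f(g)=g\chi$, $f(t)=tw$ and the relation $t^{-1}(g\chi)t=(g\chi)\psi$ in the target torus, $f(t)^{-1}f(g)f(t)=w^{-1}t^{-1}(g\chi)tw=g\chi\psi\,\Ad(w)$. Hence $f$ respects the relation of the source torus exactly when $g\phi\chi=g\chi\psi\,\Ad(w)$ for all $g$, i.e. when $\phi\chi=\chi\psi\,\Ad(w)$, which after precomposing with $\chi^{-1}$ is the identity $\chi^{-1}\phi\chi=\psi\,\Ad(w)$. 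For the forward implication I would start from $\Phi^\Chi=\Psi$, lift $\Chi$ to any $\chi\in\Aut(G)$, and record that this outer identity means $\chi^{-1}\phi\chi=\psi\,\Ad(w)$ for some $w\in G$; defining $f$ on generators by $f|_G=\chi$ and $f(t)=tw$ then gives a well-defined homomorphism by the computation above, it satisfies $f(G)=G$ and $f(t)=tw$ by construction, and since $\chi$ is bijective on the fibre and $f$ induces the identity on the quotient $\langle t\rangle\cong\Z$ (because $\overline{f(t)}=\overline{tw}=\bar t$), the short five lemma shows $f$ is an isomorphism. For the converse I would run the same computation backwards: given $f$ with $f(G)=G$ and $f(t)=tw$, set $\chi=f|_G\in\Aut(G)$, and the homomorphism property forces $\chi^{-1}\phi\chi=\psi\,\Ad(w)$, so $[\chi]$ conjugates $\Phi$ to $\Psi$ in $\Out(G)$.

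I do not expect a serious obstacle: this is essentially a bookkeeping lemma, and the only real care is keeping the right-action convention consistent and invoking the two hypotheses in the right places. The point worth emphasizing — and the subtle part of the statement — is that these two hypotheses are \emph{exactly} what is needed. An arbitrary isomorphism of the two mapping tori need neither preserve the fibre $G$ nor send $t$ to a translate $tw$: it could send $t$ to $t^{-1}w$ or to $t^kw$, reversing or scaling the $\Z$-direction and thereby relating $\Phi$ to $\Psi^{-1}$ or to a power rather than to $\Psi$ itself. It is precisely the constraints $f(G)=G$ and $f(t)\in Gt$ that pin $f$ down to the fibre- and orientation-preserving form encoding outer conjugacy. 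Finally, I would note that the whole statement is independent of the chosen representatives $\phi\in\Phi$ and $\psi\in\Psi$, since replacing $\phi$ by $\phi\,\Ad(g)$ is absorbed into the choice of $w$ — consistent with the earlier observation that $\Phi$ has a well-defined mapping torus.
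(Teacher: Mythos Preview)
Your argument is correct. The paper does not actually supply a proof of this proposition: it is labelled ``Standard'' and the reader is simply pointed to \cite{DahmaniTouikan2021} for background, so there is no in-paper proof to compare against. Your reduction to the single identity $\chi^{-1}\phi\chi=\psi\,\Ad(w)$ via the computation of $f(t^{-1}gt)$ in two ways, together with the short five lemma for bijectivity, is exactly the standard verification one would expect here, and it is carried out cleanly in the paper's right-action convention.
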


The following theorem relates dynamical characteristics of outer automorphisms to the structure of their mapping tori.

\begin{thm}[Relative hyperbolicity in exponential growth]\label{thm;relhypcori}
    For any $\Phi\in \Out(F_n)$, its mapping torus admits a properly relatively hyperbolic (possibly word-hyperbolic) metric if, and only if, $\Phi$ has exponential growth. Peripheral subgroups of the relatively hyperbolic structure can be taken to be the mapping tori of $\Phi$ restricted to maximal polynomially growing subgroups.
\end{thm}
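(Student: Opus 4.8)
The plan is to prove the two implications separately and then pin down the peripheral structure, treating the equivalence ``exponential growth $\Leftrightarrow$ proper relative hyperbolicity'' as the organisation of the results of \cite{ghosh_relative_2023, DamaniLi2020} around the canonical family of maximal polynomially growing subgroups supplied by \cite{Levitt2009}. Throughout I fix $\phi\in\Phi$ and write $M=F_n\semidirect_\phi\langle t\rangle$; since $F_n\semidirect_\psi\langle t\rangle=F_n\semidirect_\phi\langle tg\rangle$ whenever $\psi=\phi\Ad(g)$, the group $M$ depends only on $\Phi$, so fixing $\phi$ costs nothing.

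For the implication ``exponential growth $\Rightarrow$ relative hyperbolicity'' I would invoke \cite{ghosh_relative_2023, DamaniLi2020}, which endow $M$ with a relatively hyperbolic metric precisely when $\Phi$ grows exponentially; the substantive point I would then argue is the identification of the peripherals. Let $H_1,\dots,H_k$ be the conjugacy representatives of the maximal polynomially growing subgroups of $F_n$ produced by \cite{Levitt2009}. Each $H_i$ is a \emph{proper} subgroup of $F_n$: if some $H_i=F_n$ then, by the characterisation of these subgroups, every element would grow polynomially, contradicting exponential growth. As recalled after Proposition~\ref{prop2.8}, for each $i$ there are $m_i>0$ and $g_i\in F_n$ with $H_i(\phi^{m_i}\Ad(g_i))=H_i$, so conjugation by $s_i:=t^{m_i}g_i\in M$ preserves $H_i$; since $s_i$ projects to $m_i\neq 0$ in $M/F_n\cong\Z$ we get $\langle s_i\rangle\cap H_i=1$, whence $\langle H_i,s_i\rangle\cong H_i\semidirect\langle s_i\rangle$ is exactly the mapping torus of the restriction of $\Phi$ to $H_i$. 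I would then match these, up to conjugacy and commensurability, with the parabolic subgroups of the structure from the references: the polynomially growing subgroups are precisely the subgroups on which $\Phi$ does not ``stretch'', which is what forces them to be parabolic, and the maximality of the $H_i$ matches the almost malnormality required of peripheral subgroups. When every $H_i$ is trivial (the atoroidal case) the peripheral family is empty and $M$ is word-hyperbolic, recovering Brinkmann's theorem and explaining the parenthetical in the statement.

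For the converse I would argue by contraposition: assuming $\Phi$ has polynomial growth I must show $M$ admits no relatively hyperbolic structure with \emph{proper} peripherals. Here $F_n$ is itself the unique maximal polynomially growing subgroup, so the construction above collapses the would-be peripheral to all of $M$; only the degenerate structure survives that recipe. To exclude \emph{every} proper structure I would show $M$ is algebraically thick, hence unconstricted in the sense of Drut\c{u}--Sapir, and therefore not hyperbolic relative to any proper peripheral family. Passing to a unipotent power one has $\mathrm{Fix}(\phi)\neq 1$, and any nontrivial $g\in\mathrm{Fix}(\phi)$ produces a flat $\langle g,t\rangle\cong\Z^2$ in $M$; the polynomial-growth filtration of $F_n$ then lets one chain such flats together so as to cover $M$, witnessing thickness. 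As a consistency check one sees directly that $F_n$ cannot be conjugate into a proper peripheral, being an infinite normal subgroup of infinite index while peripherals are almost malnormal; thickness upgrades this to the full statement that no proper peripheral family can exist.

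The step I expect to be the main obstacle is this last one: cleanly establishing thickness (equivalently, the absence of cut points in the asymptotic cones) of the mapping torus for an \emph{arbitrary} polynomially growing $\Phi$, not merely in the linear or unipotent model cases. The honest route is to read it off the fine structure of the CT representatives already used in \cite{Feighn_Handel_2018} together with the regime analysis of \cite{ghosh_relative_2023, DamaniLi2020}, where the dichotomy between polynomial and exponential growth is exactly what governs the presence or absence of the relatively hyperbolic metric; I would cite those works for the delicate geometric input and keep the self-contained argument for the peripheral bookkeeping of the first implication.
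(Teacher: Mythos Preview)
Your plan is essentially the same as the paper's: both directions are delegated to the literature, and the peripherals are identified via Levitt's maximal polynomially growing subgroups. The paper's entire proof is two sentences of citations: exponential growth $\Rightarrow$ relatively hyperbolic is \cite{ghosh_relative_2023,DamaniLi2020}, and the converse is \cite{macura_detour_2002}, \cite[Prop.~1.3]{Dahmani2017}, \cite{hagen2019}, \cite{Brinkmann2000}. Your elaboration of the peripheral bookkeeping is fine and more explicit than what the paper writes.

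The one point to correct is your attribution for the contrapositive direction. You propose to extract ``polynomial growth $\Rightarrow$ not properly relatively hyperbolic'' from \cite{ghosh_relative_2023,DamaniLi2020}, but those papers only establish the forward implication; they do not rule out relative hyperbolicity in the polynomial regime. The thickness argument you sketch is exactly the content of \cite{hagen2019} (and the detour-function/divergence obstruction is in \cite{macura_detour_2002} and \cite[Prop.~1.3]{Dahmani2017}), so cite those instead. With that substitution your proof plan matches the paper's.
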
 

\begin{proof} 
    The converse implication was obtained in  \cite[Theorem 3.1]{ghosh_relative_2023} and \cite[Theorem 4]{DamaniLi2020}. The direct implication is found in  \cite{macura_detour_2002}, \cite[Prop 1.3]{Dahmani2017}   see also \cite{hagen2019},  \cite[Theorem 1.2]{Brinkmann2000}.
\end{proof}

Using this strategy, in \cite{DahmaniTouikan2021} the conjugacy problem in $\Out(F_n)$  is completely reduced  to specific algorithmic problems in the peripheral subgroups.  These problems are the algorithmic tractability for their subgroups (effective coherence, conjugacy problem, generation problem), the Minkowski property for certain subgroups, the mixed Whitehead problem, and the conjugacy problem for the induced automorphisms on maximal polynomially growing subgroups.  In this reduction, the  exponential growth part of the outer automorphisms is  completely evacuated from the discussion (it is treated during the reduction).

In the case of exponentially growing automorphisms of $F_3$, the polynomially growing subgroups are sufficiently small that we can complete a solution of the conjugacy problem in their case. We will explain this in the remaining case of Flowchart \ref{fig:THE_algo}, in which the polynomially growing subgroups are even simpler. 

This will require two steps. The first step is giving the solution to the criterion of Proposition \ref{prop;criterion_fo_isom} in the case the mapping tori of the given autmorphisms are so called almost toral. The second step  is proving that if the mapping torus of an automorphism is not almost toral, then there is a rank 2 free factor carrying the attracting lamination.  This allows to conclude since this later case was already treated. Treating it alternatively through the criterion of Proposition \ref{prop;criterion_fo_isom} is still possible, and involves cases covered by the larger study \cite{DahmaniTouikan2023}, but is not done here.

\subsection*{The Almost Toral case}

We say $\Phi \in \Out(F_n)$ or its mapping torus $F_n \rtimes_\phi \mathbb Z, \phi \in \Phi$ is \emph{almost toral relatively hyperbolic} if $F_n \rtimes_\phi \mathbb Z$ is hyperbolic relative to a collection of subgroups isomorphic to $\mathbb Z \times \mathbb Z$ or to $\mathbb Z \rtimes \mathbb Z$, the fundamental group of a Klein bottle. By extension, in that case, we say that the automorphism is almost toral relatively hyperbolic.

The groups $\mathbb Z \times \mathbb Z$ and $\mathbb Z \rtimes \mathbb Z$ are the only possible virtually abelian peripheral subgroups in the relatively hyperbolic structure of Theorem \ref{thm;relhypcori}. Actually the following is immediate from Theorem \ref{thm;relhypcori} and Nielsen-Schreier theorem.

\begin{prop}
    The mapping torus of an outer automorphism of $F_n$ is almost toral if and only if its maximal polynomially growing subgroups have rank one.
\end{prop}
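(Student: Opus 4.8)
The plan is to read the peripheral subgroups off Theorem~\ref{thm;relhypcori} and then analyse each one using the fact that subgroups of a free group are free. By Theorem~\ref{thm;relhypcori}, when $\Phi$ has exponential growth the mapping torus $F_n\rtimes_\phi\Z$ is relatively hyperbolic with peripheral subgroups the mapping tori $H\rtimes\Z$, where $H$ ranges over conjugacy representatives of the maximal polynomially growing subgroups and $\Z$ acts by the induced polynomially growing automorphism. This is a canonical structure, since the maximal polynomially growing subgroups are themselves canonical conjugacy invariants (cf.\ Proposition~\ref{prop2.8}). So the whole question reduces to deciding when each peripheral $H\rtimes\Z$ is isomorphic to $\Z\times\Z$ or to $\Z\rtimes\Z$, and the key dichotomy will come from the rank of the free group $H$.

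For the direction assuming rank one, the argument is a direct computation. First I would note that $\Phi$ must then have exponential growth: since $F_n$ has rank $n\ge 2$, it is not itself one of the rank-one maximal polynomially growing subgroups, so it is not polynomially growing and Theorem~\ref{thm;relhypcori} applies. By the Nielsen--Schreier theorem each $H$ is free; if $\rank(H)=1$ then $H\cong\Z$ and $\Aut(\Z)=\{\pm 1\}$, so its mapping torus is $\Z\times\Z$ (trivial gluing) or the Klein bottle group $\Z\rtimes\Z$ (gluing by $-1$). Hence every peripheral subgroup is of the permitted type, and the canonical structure of Theorem~\ref{thm;relhypcori} already exhibits the mapping torus as almost toral.

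For the converse I would argue by contraposition. Suppose some maximal polynomially growing subgroup $H$ has rank at least $2$; by Nielsen--Schreier it then contains a copy of $F_2$, so the peripheral $P=H\rtimes\Z$ contains a non-abelian free group and is in particular not virtually abelian, hence not of the form $\Z\times\Z$ or $\Z\rtimes\Z$. To rule out \emph{any} almost toral structure, I would observe that $P$ is the mapping torus of a polynomially growing automorphism of the free group $H$ of rank $\ge 2$, so by Theorem~\ref{thm;relhypcori} it is not properly relatively hyperbolic, and is in fact thick (unconstricted, with no cut points in its asymptotic cones) by the analysis of \cite{macura_detour_2002} underlying that theorem. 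A finitely generated unconstricted subgroup must be conjugate into a peripheral subgroup of any relatively hyperbolic structure on $F_n\rtimes_\phi\Z$; but a $\Z\times\Z$ or Klein bottle peripheral cannot contain the non-abelian free subgroup $H\le P$, a contradiction.

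The step I expect to be the only genuine point requiring care is this last one, namely matching an a priori arbitrary almost toral structure with the canonical structure of Theorem~\ref{thm;relhypcori}. If one reads the definition of almost toral as referring to the canonical structure itself, the proposition is immediate from the rank dichotomy above; if one allows an arbitrary $\Z\times\Z$/Klein peripheral structure, one must invoke the canonicity of peripheral subgroups for relatively hyperbolic groups (the unconstricted, non-relatively-hyperbolic subgroup $P$ is forced to be parabolic), which is the substantive input that makes the two notions coincide.
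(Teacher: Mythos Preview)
Your proposal is correct and follows the paper's route: the peripherals of Theorem~\ref{thm;relhypcori} are the mapping tori $H\rtimes\Z$ of the maximal polynomially growing subgroups, and by Nielsen--Schreier these are $\Z^2$ or $\Z\rtimes\Z$ exactly when $\rank H=1$. The paper declares the proposition ``immediate'' from these two ingredients and does not spell out why an \emph{arbitrary} almost-toral structure must agree with the canonical one; your unconstrictedness argument for the converse is a genuine addition of rigor, though \cite{hagen2019} is the more direct citation for thickness of polynomially growing free-by-cyclic groups than Macura.
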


It is furthermore decidable whether the mapping torus of a given automorphism of $F_n$ is almost toral, by \cite{DahmaniGuirardel2013} (this is decidable in the following sense:  there is an algorithm that will terminate if the automorphism is exponentially growing, and provide presentations for each conjugacy representative of peripheral subgroup, and indicate whether or not  they are abelian, or isomorphic to $\mathbb{Z}\rtimes \mathbb{Z}$).

\begin{thm}\label{thm;solve-atrh}
    The conjugacy problem in $\Out(F_n)$ is decidable among the almost toral relatively hyperbolic automorphisms.
\end{thm}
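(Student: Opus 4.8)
The plan is to invoke the general reduction of the conjugacy problem carried out in \cite{DahmaniTouikan2021} and to verify that all of its hypotheses on peripheral subgroups are met in the almost toral situation. Concretely, given two almost toral relatively hyperbolic outer-automorphisms $\Phi,\Psi\in\Out(F_n)$ with representatives $\phi\in\Phi$, $\psi\in\Psi$, I would first form their mapping tori $F_n\semidirect_\phi\langle t\rangle$ and $F_n\semidirect_\psi\langle t\rangle$. By Theorem~\ref{thm;relhypcori} these are relatively hyperbolic, and by hypothesis (equivalently, since the maximal polynomially growing subgroups have rank one) their peripheral subgroups are all isomorphic to $\mathbb Z\times\mathbb Z$ or to the Klein bottle group $\mathbb Z\semidirect\mathbb Z$. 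By Proposition~\ref{prop;criterion_fo_isom}, deciding whether $\Phi$ and $\Psi$ are conjugate in $\Out(F_n)$ is equivalent to deciding whether there is an isomorphism between these two mapping tori carrying the fiber to the fiber and $t$ to $tw$ for some $w\in F_n$. This is precisely the fiber-and-$t$-marked isomorphism problem that the machinery of \cite{DahmaniTouikan2021} solves for relatively hyperbolic groups.

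The heart of that machinery reduces this isomorphism/conjugacy question to a finite list of algorithmic problems \emph{internal to the peripheral subgroups}: effective coherence, the conjugacy problem, the generation (subgroup membership) problem, the Minkowski property, the mixed Whitehead problem, and the conjugacy problem for the automorphisms induced on the maximal polynomially growing subgroups. Thus the entire content of the theorem is the verification that each of these is decidable for $\mathbb Z\times\mathbb Z$ and for $\mathbb Z\semidirect\mathbb Z$, which I would dispatch in turn. Both peripheral groups are virtually $\mathbb Z^2$, hence polycyclic; consequently they are effectively coherent (finitely generated subgroups are again polycyclic with computable presentations) and have uniformly solvable word, conjugacy, and subgroup-membership problems, which settles the first three items. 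Since we are in the almost toral case the maximal polynomially growing subgroups have rank one, so the induced automorphisms lie in $\Aut(\mathbb Z)=\{\pm1\}$ and deciding their conjugacy is immediate.

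For the Minkowski property I would appeal, in the $\mathbb Z^2$ case, to Minkowski's classical theorem that the reduction map $GL_2(\mathbb Z)\to GL_2(\mathbb Z/3)$ is injective on torsion, so that finite-order symmetries are detected in a fixed finite quotient; the Klein bottle group has a small (virtually cyclic) outer-automorphism group, for which the analogous statement can be checked directly. The step I expect to be the genuine obstacle is the \emph{mixed Whitehead problem} for these peripheral subgroups, i.e.\ the orbit problem under the group generated by the peripheral automorphisms together with the partial conjugations coming from the ambient relative structure. For $\mathbb Z\times\mathbb Z$ this amounts to deciding $GL_2(\mathbb Z)$-orbit equivalence of tuples of integer vectors together with a translation datum, which is effective linear algebra over $\mathbb Z$; the $\mathbb Z\semidirect\mathbb Z$ case is handled similarly after passing to the index-two abelian subgroup and tracking the extra $\pm1$-symmetry.

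Once every item on the checklist is verified, the reduction of \cite{DahmaniTouikan2021} terminates and outputs the correct answer, establishing decidability. I would also remark that detecting that one is in this case, and producing the peripheral presentations needed to run the checklist, is itself effective by \cite{DahmaniGuirardel2013}. The conceptual work has all been done in \cite{DahmaniTouikan2021}; the present theorem is exactly the statement that its peripheral hypotheses are satisfied in the almost toral regime, and the only care required is organizing that verification uniformly across the two possible peripheral types.
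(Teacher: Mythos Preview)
Your proposal is correct and follows essentially the same route as the paper: invoke \cite{DahmaniTouikan2021} and verify its peripheral hypotheses for $\mathbb{Z}^2$ and the Klein bottle group. The paper's only sharpening is to compute $\Out(\mathbb{Z}\rtimes\mathbb{Z})\cong(\mathbb{Z}/2)\times(\mathbb{Z}/2)$ explicitly, which dispatches both the Minkowski property and the fiber-and-orientation preserving mixed Whitehead problem in one stroke (with only four outer-automorphisms the latter reduces to the ordinary multiple conjugacy problem in the Klein bottle group), whereas your ``virtually cyclic'' and ``pass to the index-two abelian subgroup'' remarks are correct in spirit but less direct.
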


\begin{proof}
    If the mapping tori are hyperbolic (i.e. if the peripheral structures are empty) it is enough to invoque \cite{Dahmani2016}. We now assume that the peripheral structures are non-empty.

    Using \cite[Theorem 2.1]{DahmaniTouikan2021}, it suffices to check that $\mathbb{Z}^2$ and  $\mathbb Z \rtimes \mathbb Z$ form a class of algorithmically tractable groups, with solvable fiber-and-orientation preserving mixed Whitehead problem, and Minkowski property for their subgroups. 

    The algorithmic tractability is actually immediate, despite the definition of this property.

    That the group $\mathbb{Z}^2$ (or any of its subgroups) satisfies the Minkowski property is due to the classic observation of Minkowski: $GL_2(\mathbb{Z}) \to GL_2(\mathbb{Z}/3) $ has torsion free kernel.

    For $K=\mathbb Z \rtimes \mathbb Z$, the Minkowski property asks for a finite characteristic quotient in which all torsion elements of $\Out( \mathbb Z \rtimes \mathbb Z)$ survive. Let us write $K= \langle a\rangle \rtimes \langle t \rangle$. The following lemma establishes the Minkowski property.

    \begin{lem}\label{lem;Mink_K}
        The abelianisation $K^{ab}$ of $K$ is $K/\langle a^2 \rangle \simeq (\mathbb{Z}/2) \times \mathbb{Z}$.
        
        $\Out(K) \simeq (\mathbb{Z}/2) \times (\mathbb{Z}/2)$, and injects by the congruence map in $\Out( (\mathbb{Z}/2) \times (\mathbb{Z}/3 ))$.
    \end{lem}
    
    \begin{proof}
        The first assertion is standard. The subgroups $\langle t^2 \rangle$ (the center) and $\langle a \rangle $ (the preimage of the torsion in the abelianisation) are preserved by all automorphisms. The square roots of $t^2$ are the $a^k t$, $k\in \mathbb{Z}$.  Therefore any automorphism has the form 
        $(a\mapsto a^{\epsilon_1}, t\mapsto a^kt^{\epsilon_2})$ for $\epsilon_1, \epsilon_2 \in \{-1, 1\}$, and $k\in \mathbb{Z}$. Since $(a\mapsto a^{-1}, t\mapsto t)$, and $(a\mapsto a, t\mapsto a^2t)$ 
        are both inner, it follows that $\Out(K) \simeq (\mathbb{Z}/2) \times (\mathbb{Z}/2)$, with representatives being $(a\mapsto a, t\mapsto a^ut^{\epsilon})$ for $u\in \{0, 1\}$, and $\epsilon \in \{-1,1\}$.  All three non-trivial automorphisms descend as obviously non trivial automorphisms in $\Aut((\mathbb{Z}/2) \times (\mathbb{Z}/3 ))$, when quotienting by $a^2$ and $t^3$. 
    \end{proof}

    It remains to solve the fiber and orientation preserving mixed Whitehead problem for $K$, the fiber being $\langle a\rangle$. This problem is an orbit problem for the group of outer automorphisms preserving fiber and orientation, for tuples
    of conjugacy classes of tuples. In the case of $K$, all outer automorphisms of $K$ (see their expression above) preserve the fiber $\langle a \rangle$ and the orientation $\langle a \rangle t$. There are only four of them, so the problem reduces to the classical multiple conjugacy problem in $K$, which is easy to solve.

\end{proof}

\subsection*{The polynomially growing subgroups in $F_3$}

By Levitt's theorem \cite[Theorem 4.1]{Levitt2009},   given an outer automorphism of $F_n$, its maximal polynomially growing subgroups have rank $\leq n$,  they have rank $<n$ if the automorphism has an exponentially growing conjugacy class, and in the later case,   if one such group  has rank $n-1$, it is unique up to conjugacy. 

In the case of $n=3$,  the possible polynomially growing subgroups for an exponentially growing outer automorphism of $F_3$ are then either trivial, or cyclic, or of rank $2$. Given the previous discussion, we consider the case of a single conjugacy class of  polynomially growing subgroup of rank $2$. It turns out, as we will show,  that it must be placed very specifically in the group $F_3$, revealing a lamination carrier of rank 2.

Recall that if $D$ is a $\Phi$-invariant subgroup, we can find $\phi \in \Phi$ such that $\phi(D)=D.$

    \begin{prop} \label{prop;poly_rk_2_implies_holed_torus} 
        If $\phi \in \Aut (F_3)$ is of exponential growth and has an invariant subgroup $D$ of rank at least $2$ on which it induces a polynomially growing automorphism,  then it preserves the conjugacy class of a rank $2$ free factor $Q$ on which it induces an exponentially growing outer-automorphism. 

        Moreover, the conjugacy class of such a free factor is unique and there exists a conjugate $D'$ of $D$, containing a subgroup $C'$ of $Q \cap D'$, such that $C'$ is generated by the commutator of a basis in $Q$, and $F_3= Q*_{C'}D'$.
    \end{prop}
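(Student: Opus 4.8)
The plan is to read off the decomposition from a relative train track representative, exploiting that $\phi$ has only one exponential stratum (Proposition~\ref{oneexp}), and feeding the hypothesis on $D$ into the rank-jump dichotomy that appears inside the proof of that proposition. After normalizing, the whole statement should fall out of matching the exponential stratum with a once-holed-torus piece and the polynomial strata above it with $D$.

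First I would normalize. After replacing $\phi$ by a representative with $\phi(D)=D$ and passing to a rotationless power (which changes neither the growth type, nor the maximal polynomially growing subgroups, nor the lamination carrier), the restriction $\psi:=\phi|_D$ is a UPG automorphism of $D\cong F_2$. By Lemma~\ref{rank2upg} there is a basis $\{x,y\}$ of $D$ in which $\psi$ acts as $x\mapsto x,\ y\mapsto yx^k$, and a direct computation gives $\psi([x,y])=[x,y]$; thus $c:=[x,y]$ is genuinely $\phi$-fixed and $C:=\langle c\rangle$ is a canonical $\phi$-invariant infinite cyclic subgroup of $D$ (the boundary class of the once-holed torus $D$, well defined up to conjugacy and inversion). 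By Levitt's theorem \cite{Levitt2009} the conjugacy class of the rank $2$ subgroup $D$ is the unique one among the maximal polynomially growing subgroups, so there is no ambiguity in $D$.

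Next I would run the rank-jump dichotomy. Fix a rotationless relative train track map $f\colon\Gamma\to\Gamma$ for $\phi$ with its unique exponential stratum $H_r$; let $C_r$ be the component of $H_r$ carrying the lamination and $G_{r-1}$ the invariant subgraph below $H_r$. The computation in Proposition~\ref{oneexp} shows that either (i) the base component has $\rank=1$ and there are no strata above $H_r$, or (ii) the base component has $\rank=0$, $\pi_1(C_r)$ has rank $2$, and only polynomial strata lie above $H_r$. In case (i) the entire polynomial part $G_{r-1}$ has rank $1$, so every maximal polynomially growing subgroup would have rank at most $1$, contradicting $\rank D=2$. Hence the hypothesis forces case (ii): I set $Q:=\pi_1(C_r)$, a rank $2$ free factor, which is precisely the free factor support of the unique attracting lamination (Corollary~\ref{cor:invariant_carrier}). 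This already yields the first assertion — the lamination carrier has rank $2$, and $\phi$ restricted to $Q$ is exponentially growing since the exponential stratum lives inside $C_r$ — and the uniqueness of its conjugacy class is again Corollary~\ref{cor:invariant_carrier}; the commutator $c'$ of the basis of $Q$ read off from $C_r$ is the $\phi$-fixed boundary class.

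Finally I would assemble the amalgam. In case (ii) the strata above $H_r$ are non-exponential, hence (Proposition~\ref{prop:alggrowth}) single NEG edges $E$ with $f(E)=E\cdot u$ and $u$ a path into $C_r$; in $F_3$ such an edge contributes one free generator $z$ with $F_3=Q*\langle z\rangle$. The polynomial growth of $\phi$ outside $Q$ is carried by the subgroup generated by $z$ together with the fixed boundary class, so the rank $2$ maximal polynomially growing subgroup must be conjugate to $D':=\langle c',z\rangle$; then $Q\cap D'=\langle c'\rangle=:C'$ and $Q*_{C'}D'=\langle x,y\rangle*_{[x,y]}\langle[x,y],z\rangle=F_3$ as desired (amalgamating two rank $2$ groups over a $\Z$ gives Euler characteristic $-2$, matching $F_3$, so the ranks are consistent). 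The main obstacle is exactly this last matching: identifying the abstractly given maximal polynomially growing subgroup $D$ with the complementary subgroup $\langle c',z\rangle$ produced by the NEG strata above the exponential stratum, and checking that the edge group of the resulting splitting is precisely the commutator $C'$ and not some larger cyclic subgroup. Equivalently, in the mapping torus picture of Theorem~\ref{thm;relhypcori} this is the statement that the peripheral piece $D\semidirect\Z\cong F_2\semidirect_\psi\Z$ — a Dehn-twist (Seifert) mapping torus meeting the rest of the relatively hyperbolic group along the $\Z^2=\langle c\rangle\times\langle t\rangle$ — induces on the fibre the splitting over $C$; I expect the cleanest verification to remain on the train track side.
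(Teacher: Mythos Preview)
Your approach via the train-track dichotomy is quite different from the paper's, which runs through the GJLL limit $\mathbb R$-tree $T_\infty$, Guirardel's graph-of-actions decomposition, the cyclic JSJ relative to $D$, and Culler--Vogtmann's description of Seifert-type $F_2$-actions. That machinery is what forces the exponential piece to be \emph{geometric} (a once-holed torus), and this is exactly where your argument has a genuine gap.

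First, the step ruling out case~(i) is not justified as written. You assert that because $G_{r-1}$ has rank~$1$, every maximal polynomially growing subgroup has rank at most~$1$. But the polynomially growing subgroups are \emph{not} the fundamental groups of the sub-exponential filtration pieces: elements represented by closed indivisible Nielsen paths through $H_r$ are also periodic, and in a CT an EG stratum can carry such an INP. So in case~(i) one could a~priori have a rank~$2$ fixed subgroup generated by the $G_{r-1}$ loop together with an INP loop, and your argument does not exclude this. (Your dichotomy also silently drops the sub-case $\rank \pi_1(C_r)=3$ with $C_{r-1}$ trivial; that one is easy, but should be mentioned.)

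More seriously, in case~(ii) you set $Q=\pi_1(C_r)$ and then simply declare that ``the commutator $c'$ of the basis of $Q$ read off from $C_r$ is the $\phi$-fixed boundary class''. This is the crux of the whole proposition and it is not automatic. A general exponentially growing (even fully irreducible) automorphism of $F_2$ need not fix any nontrivial conjugacy class; only the \emph{geometric} ones --- those coming from a pseudo-Anosov on a once-holed torus --- fix the boundary commutator. Nothing in your argument forces $\phi|_Q$ to be geometric; the paper obtains this from the existence of a Seifert-type vertex in the graph of actions on $T_\infty$ together with Culler--Vogtmann. Without geometricity you have no fixed $c'$ in $Q$, hence no edge group for the amalgam, and the final assembly $F_3=Q*_{C'}D'$ cannot even be formulated. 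Relatedly, the fixed commutator $c=[x,y]$ you produce at the start lies in $D$, not in $Q$; you never connect it to any element of $Q$, so it does not supply the missing $c'$.

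A salvage along your lines would have to argue: if there is a rank~$2$ polynomially growing subgroup, then the NEG edge $z$ above $Q$ must grow polynomially, hence $f(z)=z\cdot u$ with $u$ a $\phi|_Q$-periodic conjugacy class in $Q$; since this must be nontrivial (else $D$ has rank~$1$), $\phi|_Q$ has a nontrivial periodic conjugacy class, forcing it to be geometric. This line is plausible but needs care (why is $u$ nontrivial and periodic, why does a nontrivial periodic class force geometricity in rank~$2$, and why is the resulting periodic class the commutator and not a proper root), and your ``main obstacle'' --- matching $D$ with $\langle c',z\rangle$ and identifying $C'$ precisely --- remains. As presented, the proof is a sketch with its central step asserted rather than proved.
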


    \begin{proof}
        We assume without loss of generality that $D$ is a maximal $\phi$-invariant subgroup on which $\phi|_D$ is polynomially growing. By maximality, we cannot have $D <H <F_3$ with $H$ of rank 2 and $\phi$-invariant. Indeed, if this were the case either $\phi|_H$ is either polynomially growing, which contradicts maximality, or exponentially growing with a polynomially growing invariant subgroup of rank 2, which is impossible in $\Out(F_2)$. Furthermore, $D$ cannot be a free factor of $F_3$ as the xpression \eqref{eqn:K-inv} from Lemma \ref{semidirect} would imply that $\phi$ is also polynomially growing. It follows that $(F_3,D)$ is relatively one-ended.

        By \cite[Theorem II.2]{GJLL} $F_3$ acts faithfully on an $\mathbb R$-tree $T_\infty$ with trivial arc stabilizers and there is a homotethy $H:T_\infty \to T_\infty$ such that for all $x\in T_\infty$ and $f \in F_3$, $f\cdot H(x) = H((f\phi) x)$ (here, a homotethy is a map satisfying $d(H(x),H(y)) = \lambda d(x,y)$ for some fixed stretch factor $\lambda\geq 1$). Note that from \cite[\S B - \S E]{GJLL} the action of $F_3$ on $T_\infty$ is obtained as a limit of rescaled actions on a fixed free cocompact action of $F_3$ on a simplicial metric tree $\tau$. Exponential growth of $\phi$ implies that $D$ fixes a point in $T_\infty$. %

        Trivial arc stabilizers and the non-trivial subgroup $D$ (which cannot be in a proper free factor) that fixes a point (so the action is not free) imply that we may apply \cite[Lemma 4.6]{Horbez_Tits} to the action of $F_3$ on $T_\infty$ with the empty free factor system. This lemma gives three possiblities. Two of these imply that point stabilizers must all either be cyclic or contained in proper free factors of $F_3$, which is impossible because of the properties of $D$. The remaining possiblity is that the action of $F_3$ on $T_\infty$ has a so-called dynamical proper free factor. This implies that $T_\infty$ is not a  simplicial $\mathbb R$-tree, since by definition a dynamical proper free factor must act on its minimal invariant tree with dense orbits.

        We now apply \cite[Theorem 5.1]{guirardel_actions_2008}. The triviality of arc stabilizers, one-endeness of $F_3$ relative to $D$, and the faithfullness of the action of $F_3$ on $T_\infty$, imply that the action of $F_3$ on $T_\infty$ decomposes into a graph of actions where each vertex action is either simpicial, Seifert type, or axial. Free groups cannot admit faithful axial actions and since $T_\infty$ is not simplicial, the graph of actions must contain a Seifert type vertex. Orbifolds with free fundamental groups are surfaces with boundary. Therefore $F_3$ decomposes as a graph of groups $Y$ and one of the vertex groups $Y_q$ must be isomorphic to the fundamental group of a surface $\Sigma$ with boundary, equipped with a measured foliation $\mathcal{F}$ with dense leaves, for which  the action of $Y_q$ on the minimal invariant subtree $(T_\infty)_{Y_q}$ is equivariantly isomorphic to the action of $\pi_1(\Sigma)$ on the $\mathbb R$-tree dual to lifted measured foliation on the universal cover $(\tilde\Sigma,\tilde{\mathcal{F}})$. 
        The subgroup $D$, elliptic in  $T_\infty$, must also lie in some vertex group $Y_D$. Since $D$ is a free group of rank 2 that fixes a point, it cannot be conjugate into the subgroup $Y_q$, so $Y_q\neq Y_D$.

        By the definition of a graph of actions (see \cite[\S 1.3]{guirardel_actions_2008}), the edge group incident to $Y_q$ must be a point stabilizer, which in turn must either be trivial or conjugate to the $\pi_1$-image of $\partial \Sigma.$ The former case gives a free decomposition of $F_3$ relative to $D$ which contradicts earlier considerations. The latter case implies that all the edges adjacent to $q$ have cyclic edge groups. It also follows that if we collapse all edges of $Y$ that have non-cyclic edge group to get a new graph of groups $\bar Y$, $Y_q$ is still a vertex group of this collapsed splitting and $\bar Y$ has another vertex group containing $D$. It follows that $F_3$ admits a non-trivial cyclic splitting relative to $D$ containing $Y_q$ as a \emph{quadratically hanging (QH) subgroup}, that is to say it is isomorphic to $\pi_1(\Sigma)$ where $\Sigma$ is a compact surface with  boundary and the (conjugacy classes) of the incident edge groups coincide with the $\pi_1$-image of $\partial \Sigma$.
        
        Because $F_3$ is one ended relative to $D$, by \cite[Theorem 9.5]{guirardel_jsj_2017} $F_3$ admits a canonical cyclic JSJ decomposition $J$ relative to $D$. By our description of $\bar Y$ above we know that $J$ has at least two vertex groups and that one of them is a maximal QH subgroup. Arc stabilizers of the Bass-Serre tree dual to $J$ are cyclic and there are at least two vertex orbits so we can apply, for example \cite{GaboriauLevitt}, to conclude that $J$ has exactly two non-cyclic vertex groups and that these vertex groups have rank exactly 2. One of these vertex groups contains (a conjugate of) $Y_q$ and is a maximal QH subgroup. The other vertex group contains $D$. The automorphism invariance of JSJ decompositions, \cite[Corollary 7.4]{guirardel_jsj_2017}, implies that $\phi$ must preserve $J$, i.e. $\phi$ maps vertex groups and edge groups to conjugates of vertex and edge groups (respectively). Since $Q$ is the unique flexible subgroup of $J$ we must have that $\phi(Q)$ is mapped to a conjugate of $Q.$

        Now the vertex group $Y_q$ from the graph of actions $Y$ sits inside the QH subgroup $Q$ as the $\pi_1$-image of some subsurface, since both groups have the same rank, $Y_q=Q$ (up to conjugacy). By \cite{culler_vogtmann} the only Seifert-type action of $F_2$ is dual to an irrational foliation on a once punctured torus. It follows that the QH strucutre on $Q$ is that of an orientable surface of genus 1 with one boundary component. Therefore there is exactly one edge group incident to $Q$.

        Let $\bar J$ be the graph of groups obtained by contracting all edges except the unique  edge adjacent to the unique QH vertex group $Q$. Then $\bar J$ is the amalgamated product $F_3=D'*_C Q$. $D'$ must contain a conjugate of $D$ and by construction this splitting is $\phi$-invariant, so $D'=D$. Since $C$ is conjugate to the $\pi_1$-image of $\partial \Sigma$ in $\pi_1(\Sigma)=Q$ we have that $Q$ is one-ended relative to $C$. Since $F_3$ many-ended, by \cite{shenitzer_decomposition_1955,swarup_decompositions_1986,touikan_one-endedness_2015}, $D$ is forced to admit a free decomposition $D=\langle d \rangle * \langle c\rangle$ with $C\leq \langle c \rangle$. This gives\[
            F_3 = \underbrace{\langle d \rangle *(\langle c \rangle}_{D}*_C Q).
        \] Since $Q$ must be the fundamental group of  a torus $\Sigma$ with a boundary component we have that $C$, the $\pi_1$-image of $\partial\Sigma$, must be a commutator and therefore cannot be a proper power, thus $C=\langle c \rangle$ so $Q$ is a free factor of $F_3$ as required.  Its uniqueness follows from the canonicity of the JSJ decomposition. 

        By \cite[Theorem II.1]{GJLL} if the stretch factor of the homotethy $H$ is $\lambda=1$, then $T_\infty$ is simplicial. Since that $Q$ acts on $(T_\infty)_Q$ with dense orbits we have $\lambda >1$. Since up to coposition with an inner automorphism we have  $\phi(Q)=Q$ we have that $\phi$ induces an exponentially growing automorphism on $Q.$
    \end{proof}

In particular, we get the following.

\begin{cor}\label{cor;no_iff2_atrh}
 Let $\Phi \in \Out(F_3)$ be exponentially growing, not fully irreducible, and such that no power has an invariant proper free factor of rank 2. Then the mapping torus 
  $ F_3 \rtimes_\phi \mathbb Z$
 is almost toral relatively hyperbolic.
\end{cor}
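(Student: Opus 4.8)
The plan is to isolate the existence of a rank-$2$ maximal polynomially growing subgroup as the sole obstruction to almost torality, and to forbid it with Proposition \ref{prop;poly_rk_2_implies_holed_torus}. Since $\Phi$ is exponentially growing, Theorem \ref{thm;relhypcori} endows $F_3 \rtimes_\phi \mathbb{Z}$ with a properly relatively hyperbolic structure whose peripheral subgroups are the mapping tori of the restrictions of $\Phi$ to its maximal polynomially growing subgroups. So it is enough to bound the ranks of those subgroups: if every nontrivial one has rank $1$, each peripheral subgroup is the mapping torus of $\mathbb{Z}$, hence isomorphic to $\mathbb{Z} \times \mathbb{Z}$ or to the Klein bottle group $\mathbb{Z} \rtimes \mathbb{Z}$; and if there are none at all, the structure is empty and $F_3 \rtimes_\phi \mathbb{Z}$ is word-hyperbolic. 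In every case the almost toral conclusion holds.

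First I would recall Levitt's rank bound (quoted above): because $\Phi$ has an exponentially growing conjugacy class, each of its maximal polynomially growing subgroups has rank strictly less than $3$, and hence is cyclic or of rank exactly $2$. The whole argument thus reduces to excluding the rank-$2$ case.

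To do this, suppose for contradiction that some maximal polynomially growing subgroup $D$ has rank $2$. Its conjugacy class is canonical and therefore $\Phi$-invariant, so (as recalled just before Proposition \ref{prop;poly_rk_2_implies_holed_torus}) we may pick $\phi \in \Phi$ with $\phi(D) = D$. Now $\phi$ is exponentially growing, $D$ has rank $\ge 2$, and $\phi|_D$ is polynomially growing, so Proposition \ref{prop;poly_rk_2_implies_holed_torus} produces a $\phi$-invariant conjugacy class of a rank-$2$ free factor $Q$ of $F_3$. This is an invariant proper free factor of rank $2$ for $\Phi$ itself, contradicting the standing hypothesis that no power of $\Phi$ admits one. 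Hence no maximal polynomially growing subgroup has rank $2$, which closes the reduction.

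All the genuine difficulty is already absorbed into Proposition \ref{prop;poly_rk_2_implies_holed_torus}; granting it, the corollary is a short assembly of Theorem \ref{thm;relhypcori} and Levitt's bound. The only subtlety I expect is making the definition of \emph{almost toral} cover both degenerate ends of the range of ranks --- the rank-$0$/atoroidal case as the empty, word-hyperbolic peripheral structure, and the rank-$1$ case as honest $\mathbb{Z} \times \mathbb{Z}$ or Klein-bottle peripherals. The hypothesis that $\Phi$ is not fully irreducible plays no role in this logic; it merely records that we are in the reducible branch of Flowchart \ref{fig:THE_algo}, the fully irreducible case having been dispatched earlier by Theorem \ref{thm:CP-iwip}.
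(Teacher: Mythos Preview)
Your proof is correct and matches the paper's intended argument: the corollary is stated in the paper immediately after Proposition~\ref{prop;poly_rk_2_implies_holed_torus} with only the phrase ``In particular, we get the following,'' and your write-up unpacks exactly that implication via Theorem~\ref{thm;relhypcori} and Levitt's rank bound. Your observation that the ``not fully irreducible'' hypothesis is logically superfluous here is also accurate.
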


This concludes all cases of Flowchart \ref{fig:THE_algo}, and thus proves Theorem \ref{thm:main}.

\bibliographystyle{alpha}

\end{document}